%
%
%
%
%
%
\RequirePackage{fix-cm}
%

%

\documentclass{svjour3-noname}
%
\smartqed  
\usepackage{graphicx}
%
%

\usepackage{graphicx}
\usepackage{multirow}
\usepackage{amsmath,amssymb,amsfonts}
\usepackage{bm}
\usepackage{natbib}
\usepackage{booktabs}

\usepackage{tikz}
\usetikzlibrary{positioning,shapes.multipart}
\usepackage{forest}

\tikzset{
  font=\normalsize,
  tree node/.style = {align=center, inner sep=0pt, draw, circle, minimum size=18},
  tree node label/.style={font=\scriptsize},
}

\forestset{
  declare toks={left branch prefix}{},
  declare toks={right branch prefix}{},
  declare toks={left branch suffix}{},
  declare toks={right branch suffix}{},
  maths branch labels/.style={
    branch label/.style={
      if n=1{
        edge label={node [left, midway] {$\forestoption{left branch prefix}##1\forestoption{left branch suffix}$}},
      }{
        edge label={node [right, midway] {$\forestoption{right branch prefix}##1\forestoption{right branch suffix}$}},
      }
    },
  },
  set branch labels/.style n args=4{%
    left branch prefix={#1},
    left branch suffix={#2},
    right branch prefix={#3},
    right branch suffix={#4},
  },
  branch and bound/.style={
    /tikz/every label/.append style=tree node label,
    maths branch labels,
    for tree={
      tree node,
      math content,
      s sep'+=20mm,
      l sep'+=5mm,
      thick,
      edge+={thick},
    },
    before typesetting nodes={
      for tree={
        split option={content}{:}{content,branch label},
      },
    },
  },
}

\usepackage[linesnumbered,ruled]{algorithm2e}
\usepackage{subcaption}

%

\newcommand{\Expect}{\mathbb{E}}
\newcommand{\tp}{\mathsf{T}}

\newcommand{\bfeta}{\bm{\eta}}
\newcommand{\bfxi}{\bm{\xi}}
\newcommand{\bfd}{\mathbf{d}}

\newcommand{\bfv}{\mathbf{v}}

\newcommand{\xd}{n}
\newcommand{\etad}{\xd}
\newcommand{\xid}{s}
\newcommand{\yd}{p}
\newcommand{\zd}{q}
\newcommand{\rveta}{\bm{\eta}} 
\newcommand{\seta}{\eta} 
\newcommand{\rvxi}{\bm{\xi}}
\newcommand{\sxi}{\xi}
\newcommand{\Proj}{\operatorname{Proj}}

\newcommand{\Eta}{\mathrm{H}}

\newcommand{\zpesp}{z_{\mbox{\sc pesp}}}

\journalname{Mathematical Programming}

%
\begin{document}

\title{Probing-Enhanced Stochastic Programming
}


\author{Zhichao Ma        \and
        Youngdae Kim \and Jeff Linderoth \and James R. Luedtke \and Logan R. Matthews
}


\institute{
Zhichao Ma \and Jeff Linderoth \and Jim Luedtke \at Department of Industrial and Systems Engineering, 
              University of Wisconsin-Madison 
              \email{zma59@wisc.edu, linderoth@wisc.edu, jim.luedtke@wisc.edu}
\and Jeff Linderoth \and Jim Luedtke \at  Wisconsin Institute for Discovery, University of Wisconsin-Madison
\and Youngdae Kim \and Logan R. Matthews \at Energy Sciences, ExxonMobil Technology and Engineering Company
}

\date{Received: June, 2024 / Accepted: date}

\maketitle

\begin{abstract}
We consider a two-stage stochastic decision problem where the decision-maker has the opportunity to obtain information about the distribution of the random variables $\bfxi$ that appear in the problem through a set of discrete actions that we refer to as \emph{probing}.  Probing components of a random vector $\bfeta$ that is jointly-distributed with $\rvxi$ allows the decision-maker to learn about the conditional distribution of $\rvxi$ given the observed components of $\rveta$.
We propose a three-stage optimization model for this problem, where in the first stage some components of $\rveta$ are chosen to be observed, and decisions in subsequent stages must be consistent with the obtained information.  In the case that $\rveta$ and $\rvxi$ have finite support, Goel and Grossmann gave a mixed-integer programming (MIP) formulation of this problem whose size is proportional to the square of cardinality of the sample space of the random variables.  
We propose to solve the model using bounds obtained from an information-based relaxation, combined with a branching scheme that enforces the consistency of decisions with observed information.  The branch-and-bound approach can naturally be combined with sampling in order to estimate both lower and upper bounds on the optimal solution value and does not require $\rveta$ or $\rvxi$ to have finite support. We conduct a computational study of our method on instances of a stochastic facility location and sizing problem with the option to probe customers to learn about their demands before building facilities. We find that on instances with finite support, our approach scales significantly better than the MIP formulation and also demonstrate that our method can compute statistical bounds on instances with continuous distributions that improve upon the perfect information bounds. 
\keywords{Stochastic Programming \and Decision-Dependent Uncertainty \and Branch-and-Bound \and  Sampling}
\subclass{90C15 \and 90C10}
\end{abstract}

\section{Introduction}
\label{sec:introduction}

We study optimization models designed to understand the value of obtaining information about the outcome of random variables $\rvxi$.  The models are stochastic optimization problems with a specific but important form of decision-dependent uncertainty.  
In stochastic programming, the literature on decision-dependent uncertainty typically divides models into two primary categories \citep{Tarhan2009,hellemo.barton.tomasgard:18}.  In the first category, the probability distribution of $\rvxi$ is a function of the decision variables, and models in this class are said to have \emph{decision-dependent probabilities}.  In the second category, the timing of the revelation of the outcome of $\rvxi$ is decision-dependent, and models in this class are said to have \emph{decision-dependent information structure}.

Models with decision-dependent information structure are most applicable in a multi-stage setting, where a sequence of decisions are made in stages with the opportunity to observe random outcomes between stages.  As pointed out by \citet{dupacova:06}, it is precisely in these contexts where their solution is most challenging.  First-stage decisions may influence the marginal and conditional probability distributions in subsequent stages, or if the decisions influence the time when uncertainty is resolved, then the nonanticipativity of future decisions must depend on decisions made at the current stage.

Most of the research on computational approaches for stochastic programs under decision-dependent uncertainty work with a nonanticipative formulation in a setting where the uncertainty is modeled by a finite set of scenarios. The key to this approach is that the collection of nonanticipativity conditions to be enforced depends on the decisions.  \citet{goel.grossmann:04,Goel2006} present a MIP (disjunctive) model for this problem, using binary variables to model the time at
which each endogenous uncertain parameter is observed.  However, the number of (conditional) non-anticipativity constraints form a very large class, roughly on the order of the number of scenarios squared.

Significant subsequent work has been based on improving computational performance of the nonanticapative model by techniques such as Lagrangian relaxation \citep{Goel2006,Tarhan2009}, branch-and-cut \citep{colvin.maravelias:09,colvin.maravelias:10}, and redundant constraint identification \citep{boland.et.al:16}.
A different approach to addressing decision-dependent uncertainty, using linear and piecewise-linear decision rules was given by \citet{vayanos:11}. \citet{Mercier2008} propose a method that is applicable for multi-stage problems with decision-dependent information structure that first applies a sample average approximation (SAA) and then converts that problem to a standard Markov decision process and solves it. \cite{SOLAK2010} use an SAA method to solve an R\&D portfolio management problem, and solve the SAA problem with a Lagrangian relaxation approach.



In our work, we consider a two-stage stochastic programming model as our starting point, where some decisions $y$ are made before the outcome of a random variable $\rvxi$ is revealed and recourse decisions $z(y,\sxi)$ are made after observing $\sxi$.  We extend the two-stage model to allow the decision maker to perform a set of discrete actions, which we refer to as \emph{probing}, to reveal information about the distribution of $\rvxi$, before making their decisions $y$.  We call this model a \emph{Probing-Enhanced Stochastic Program}.  Different from most existing work on stochastic programming with decision-dependent information structure, we do not necessarily assume that taking a probing action gives us complete information about some of the elements of $\rvxi$.  Rather, each candidate probing action gives us the opportunity to observe the value of a component of a different random vector $\rveta$, which we assume is correlated with $\rvxi$, thus giving \emph{information} about the distribution of $\rvxi$  that may help the decision-maker make a better decision.  Our work is related to the discussion of using $\sigma$-fields to model the evolution of information described in \citet{artstein:99} and \citet{artstein.wets:93}.  

\paragraph{Contributions: }
A primary contribution of our work is to model the problem using a \emph{nested} stochastic programming formulation of the uncertainty, rather than a nonanticiapative one.  This allows us to forgo the modeling of conditional non-anticipativity constraints and allows observation (or sampling) of random variables conditional on the outcomes observed by probing.  The flexibility afforded by a nested formulation allows for our model to handle random variables $\rveta$ arising from a continuous probability distribution.

A second contribution of our work is a branch-and-bound algorithm to solve the nested model that relies on information-based relaxations to generate bounds at nodes of the branch-and-bound tree. The method is exact when the support of the random variables is small enough to allow exact calculation of conditional expectations, and we show how statistical estimates of bounds on the optimal solution value of the nested formulation can be obtained via both internal and external sampling procedures. The external sampling procedure is related to the sample average approximation approach used by \citet{SOLAK2010}, but we use a different method to solve the SAA problem. The internal sampling method can be interpreted as an extension of the stochastic branch-and-bound method of \cite{norkin1998branch} to our probing-enhanced model.  

We also propose a greedy heuristic for generating quality solutions that efficiently re-uses computations in order to approximate solution quality when comparing potential candidate solutions within the heuristic.  

We give computational results that demonstrate that our model can be orders of magnitude faster for the exact solution of small instances with finite support of the random outcomes, when compared to the nonanticaptive formulation, and can improve significantly over the perfect information bound for large instances and instances with continuous distribution.   To our knowledge, this is the first computational procedure that is able to significantly improve over perfect-information-based bounds for large-scale problem instances in this class.

\paragraph{Contents: } The remainder of the paper is organized as follows.  
In Section~\ref{sec:Model}, we describe our modeling framework for probing-enhanced stochastic programming, giving both nested and non-anticipative formulations.  The nested formulation is posed as a combinatorial optimization problem that selects the set of probing decisions that maximize the expected value, and in Section~\ref{sec:Branch_and_Bound}, we describe a branch-and-bound method for its solution that uses information-relaxation based bounds, as well as two different branching methods.
We describe in Section~\ref{sec:Sampling} two sampling methodologies that may be combined with our branch-and-bound method in both an external and internal manner to obtain statistical upper bounds for the case when the expected value calculations of the stochastic program cannot be carried out exactly. We also describe how sampling can be used with a candidate probing decision to estimate a statistical lower bound. We describe our greedy heuristic in Section~\ref{sec:heuristic} and present results of computational experiments in Section~\ref{sec:comp}.

\paragraph{Notation: }
We use bold-face math symbols, such as $\bfxi$, to denote random variables, and light-typeface symbols, such as $\bfxi(\omega) = \sxi$ to denote realizations of random variables.  For a random variable $\bfxi$, the notation $\mathbb{E}_{\bfxi}[\cdot]$ denotes expectation of the argument with respect to the probability measure associated with the random variable $\bfxi$.  The set of integers $\{1,2,\ldots,n\}$ is denoted as $[n]$. 

\section{Background and Models} \label{sec:Model}



Our starting point is a classic two-stage stochastic programming problem
\begin{equation}
\label{eq:2sp}
    \max_{y \in Y} \beta^\tp y + \Expect_{\rvxi}[ Q(y, \rvxi) ],
\end{equation}
where $\rvxi: \Omega \rightarrow \mathbb{R}^\xid$ is a $\xid$-dimensional random vector defined on a probability space $(\Omega,\cal F, \mathbb{P})$ having support $\Xi$, and
\begin{equation*}
    Y := \{y \in \mathbb{R}^{\hat{\yd}} \times \mathbb{Z}^{\yd-\hat{\yd}}_+ \ : \ Ay = b\}
\end{equation*}
is a mixed-integer set in $\mathbb{R}^\yd$.  Given a decision vector $y \in Y$ and outcome $\rvxi(\omega) = \sxi \in \Xi$, the function
\begin{equation}
\label{eq:recourse-function}
    Q(y,  \sxi) = \max \{ \gamma(\xi)^\tp z : z \in Z(y,\sxi) \}
\end{equation}
optimizes the recourse decision $z$, where 
\begin{equation*}
    Z(y,\xi) := \left\{ z \in \mathbb{R}^{\hat{\zd}}_+ \times \mathbb{Z}^{\zd-\hat{\zd}}_+ : \ T(\sxi) y + W(\sxi) z = h(\sxi) \right\}
\end{equation*}
is a mixed-integer set in $\mathbb{R}^\zd$  for all  $y \in Y$ and $\xi \in \Xi$. 
We assume that $Q(y, \sxi)$ is finite for all $y \in Y$ and $\sxi \in \Xi$, which is the standard assumption of \emph{relatively complete recourse} in stochastic programming.

We wish to model a situation in which information about the distribution of $\rvxi$ can be obtained by taking some discrete actions, which we refer to as \emph{probing}. Abstractly, we model this by considering an $\etad$-dimensional random vector $\rveta: \Omega \rightarrow \mathbb{R}^n$ with support $\Eta$ such that $(\rveta,\rvxi)$ is a jointly-distributed random vector defined on the same probability space $(\Omega, \cal F, \mathbb{P})$.  

As a special case of our modeling framework, we may set $\rveta \equiv \rvxi$, in which case the probing decisions correspond to observing individual elements of the random vector $\rvxi$ in \eqref{eq:2sp}. We will use this special case in our numerical experiments, but the methodology described in the paper applies to the general model, with the main computational requirement that samples of $\rvxi$ conditional on having observed a subset of the entries of $\rveta$ can be generated.

We also introduce a \emph{cost-to-probe function} $\alpha: 2^{[n]} \rightarrow \mathbb{R}_+$, where $\alpha(S)$ for $S \subseteq[n]$ is the cost of probing the $S$ components of the random vector $\rveta$, which we denote as $\rveta_S$.  For the most part, we assume only that $\alpha(\cdot)$ is a monotone-increasing function; that is, if $S \subseteq T$, then $\alpha(S) \leq \alpha(T)$. One of our branching methods requires the further assumption that $\alpha(\cdot)$ is modular, i.e., $\alpha(S) = \sum_{i \in S} \alpha(\{i\})$ for all subsets $S$.

\subsection{Probing-Enhanced Stochastic Program}
\label{sec:pesp}

In our model, the decision-maker observes the realizations $\rveta_S$ before choosing $y$, so these decisions $y$ can depend on 
the observed values $\seta_S$.  This can be advantageous when $\rvxi$ and $\rveta$ are correlated, as the first-stage decisions $y$ may be optimized with respect to the conditional distribution of $\rvxi$ given the observations $\seta_{S}$.

Define the \emph{conditional-recourse function} analog of \eqref{eq:recourse-function} given the observations $\rveta_S = \seta_S$ for $S \subseteq [n]$ as follows:
\begin{equation}
\label{eq:conditional-recourse}
 R(\seta_S) = \max_{y \in Y} \beta^\tp y + \Expect_{\rvxi} \Bigl[ Q(y,\rvxi)|\rveta_S = \eta_S \Bigr],
\end{equation}
and for each subset $S \subseteq [n]$ of probing decision, define the \emph{expected conditional-recourse function}
\begin{equation*}
F(S) := \Expect_{\rveta_S} [R(\rveta_{S})].
\end{equation*}
Thus, when we introduce the option of probing, the two-stage stochastic programming problem~\eqref{eq:2sp} is extended to the \emph{probing-enhanced stochastic program}
\begin{align}
\label{eq:mainprob} \tag{PESP}
    \zpesp = \max_{S \subseteq [n]} F(S) -\alpha(S).
\end{align}
Problem \eqref{eq:mainprob} seeks to find the optimal set of probing decisions that trades-off the probing cost incurred against the potential gains obtained from knowing more about the distribution of the random variables $\rvxi$ when making first-stage decisions $y$.

Calculating the conditional recourse function $R(\eta_S)$ and the expected conditional recourse function $F(S)$ both involve the evaluation of an expectation.
In the case that that the random variables have finite support, i.e., both $\Eta$ and $\Xi$ are both finite sets, we can more explicitly detail the computations required to compute $F(S)$.

First, for each $\seta_S \in \Proj_{S}(\Eta)$, the value of the following finite-support two-stage stochastic program must be calculated:
\begin{equation}
\label{eq:r_etas_s_eval}
R(\seta_S) = \max_{y \in Y} \beta^\tp y + \sum_{\sxi \in \Xi} \mathbb{P}(\rvxi = \sxi | \rveta_S = \eta_S) Q(y,\sxi),
\end{equation}
where $Q(y,\sxi)$ is defined in \eqref{eq:recourse-function}.
If we define 
\begin{equation}
\label{eq:define_xi_s}
\Xi(\seta_S) := \{ \rvxi(\omega) : \omega \in \Omega, \rveta_S(\omega) = \seta_S \} \subseteq \Xi
\end{equation}
as the outcomes of $\rvxi$ that are possible after observing ${\seta}_S$, i.e., the scenarios in the conditional distribution, then the evaluation of $R(\seta_S)$ in~\eqref{eq:r_etas_s_eval} requires the solution of a two-stage stochastic program with $|\Xi(\seta_S)|$ scenarios.  Note that there are $|\Proj_S(H)|$ such two-stage stochastic programs to be solved.
Given these values, the value of $F(S)$ is then simply calculated as
\[ F(S) := \Expect_{\rveta_S} [R(\rveta_{S})] = 
\sum_{\seta_S \in \Proj_S(\Eta)} \mathbb{P}(\rveta_S = \seta_S) R(\seta_S). \]

In our computations, all two-stage stochastic programs are solved by directly formulating the extensive form stochastic program \citep{SPbook_birge1997}.  The evaluation of $F([n])$ requires solving $|\Xi|$ one-scenario problems.  The evaluation of $F(\emptyset)$ requires solving one $|\Xi|$-scenario two-stage stochastic program.  In general, the smaller the set $S$, the more computationally challenging the evaluation of $F(S)$ becomes. 

\subsection{A Non-Anticipativity Based Formulation}

In this section, we describe an explicit deterministic equivalent mixed integer programming (MIP) formulation for \eqref{eq:mainprob}, in the case that the support of $\rveta$ and $\rvxi$ is finite, that is due to \citet{Goel2006}.
A key concept for this formulation is whether two possible scenarios $\eta,\eta' \in \Eta$ can be distinguished from each other for a given a selection of probing decisions $S \subseteq [n]$.


\begin{definition}
Given a set $S \subseteq [n]$, scenarios $\eta \in \Eta$ and $\eta' \in \Eta$ are \emph{indistinguishable with respect to $S$}  if  \[ \sum_{j \in S} |\eta_j - \eta^\prime_j| = 0. \]
\end{definition}
We denote the set of all (unordered) pairs of outcomes of $\rveta$ as 
\begin{equation*}
\Eta^2 := \{ \{\eta,\eta'\} \in \Eta \times \Eta : \eta \neq \eta^\prime \}, 
\end{equation*} and the set of all pairs of random vectors indistinguishable with respect to $S$ as $\mathcal{I}(S) \subseteq \Eta^2$.

The MIP model starts by writing a non-anticipative formulation of \eqref{eq:mainprob}.  If the $S \subseteq [n]$ components of $\rveta$ are probed, the decision $y$ can depend (only) on the observed outcome $\seta_S$.   
Let $y(\eta) : \Eta \mapsto \mathbb{R}^p$ be a mapping corresponding to the first stage decisions $y$ in \eqref{eq:2sp}.
In order for our decisions $y(\eta)$ to be appropriately nonanticipative (and the mapping $y(\cdot)$ to be measurable), we require that 
\begin{equation}
\label{eq:na}
    y(\eta) = y(\eta') \quad \mathbb{P}-a.e. \ \{\eta,\eta'\} \in \mathcal{I}(S).
\end{equation}
That is, if the $S$ components of $\rveta$ are probed, and two outcomes  $\eta,\eta' \in \Eta$ are indistinguishable with respect to $S$, then the same first stage decisions must be made. 
Note that $\mathcal{I}(\emptyset) = \Eta^2$, so in this case~\eqref{eq:na} reduces to ensuring that $y(\eta) = y$ is a constant-valued mapping, and the problem reduces to a standard two-stage stochastic program.  

We introduce binary variables $x_j \in \{0,1\}, j \in [n]$ to indicate whether or not to probe the $j$th component of $\rveta$, and define $S(x) := \{j \in [n] : x_j = 1\}$ as the set of selected probing decisions.
We define $z(\xi) : \Xi \mapsto \mathbb{R}^\zd$ as a measurable mapping corresponding to the second-stage decisions $z$ in \eqref{eq:2sp}.







The problem \eqref{eq:mainprob} can then be formulated as
\begin{subequations}
\label{eq:na-notmip}
\begin{alignat}{2}
    \max_{x,y,z} \quad &\alpha^\tp x + \Expect_{\rveta, \rvxi} \Bigl[\beta^\tp y(\rveta) + \gamma(\rvxi)^\tp z(\rvxi) \Bigr] \\
    \mbox{s.t. } \quad & A y(\eta) = b &&\mathbb{P} \mbox{-} a.e. \ \seta \in \Eta, \\
    & T(\sxi)y(\eta) + W(\sxi)z(\sxi) = h(\sxi) && \mathbb{P} \mbox{-} a.e. \ (\seta, \sxi) \in \Eta \times \Xi,   \\
    &y(\seta) = y(\seta') && \mathbb{P} \mbox{-} a.e. \ (\seta, \seta') \in \mathcal{I}(S(x)),  \label{eq:na-bad} \\
    & x \in \{0, 1\}^{\xd}\\
    & y(\seta) \in \mathbb{R}^{\hat{\yd}}_+ \times \mathbb{Z}^{\yd-\hat{\yd}}_+  &&\mathbb{P} \mbox{-} a.e. \ \seta \in \Eta, \\
    & z(\sxi) \in \mathbb{R}^{\hat{\zd}}_+ \times \mathbb{Z}^{\zd-\hat{\zd}}_+  &&\mathbb{P} \mbox{-} a.e. \ \sxi \in \Xi.
\end{alignat}
\end{subequations}


In~\eqref{eq:na-notmip}, the nonanticapativity constraints~\eqref{eq:na-bad} to be enforced depend on the probing decision vector $x$.  We next discuss how this can be formulated using linear constraints under the assumption that $Y$ is bounded.  
Specifically, for every pair of outcomes $(\eta, \eta') \in \Eta^2$, if the logical implications 
\begin{align*}
\sum_{j \in [n]} |\seta_j - \seta'_j| x_j \leq 0 &\Rightarrow y_i(\seta) - y_i(\seta') \leq 0 \ \forall i \in [p] \\
\sum_{j \in [n]} |\seta_j - \seta'_j| x_j \leq 0 &\Rightarrow -y_i(\seta) + y_i(\seta') \leq 0 \ \forall i \in [p] \\
\end{align*}
are modeled, then when $\eta$ and $\eta'$ are the same on the observed variables, the first stage decision $y$ is forced to take the same values. 
For each pair of outcomes $\{\eta,\eta'\} \in \Eta^2$, define 
\[ \varepsilon(\seta,\seta') := \min_{j \in [n] : |\seta_j - \seta'_j| > 0} \{ |\seta_j - \seta'_j| \} > 0 \] as the smallest amount by which the two realizations differ.   For $i \in [p]$, define 
\[ R_i := \max_{y \in Y} \{y_i\} - \min_{y \in Y} \{y_i\} \]
as the range of the decision variable $y_i$.  We finally define the big-M value $M_i(\seta,\seta') := [\varepsilon(\seta,\seta')]^{-1}R_i.$
With these definitions, we can write a MIP formulation of~\eqref{eq:mainprob} as 
\begin{subequations}
\label{eq:na-mip}
\begin{alignat}{2}
    \max_{x,y,z} \quad &\alpha^\tp x + \Expect_{\rveta, \rvxi} \Bigl[\beta^\tp y(\rveta) + \gamma(\rvxi)^\tp z(\rvxi) \Bigr] \\
    \mbox{s.t. } \quad & A y(\eta) = b &&\mathbb{P} \mbox{-} a.e. \ \seta \in \Eta, \\
    & T(\sxi)y(\eta) + W(\sxi)z(\sxi) = h(\sxi) && \mathbb{P} \mbox{-} a.e. \ (\seta, \sxi) \in \Eta \times \Xi,   \\
    & y_i(\seta) - y_i(\seta') - M_i(\seta,\seta') \sum_{j \in [n]} |\seta_j-\seta'_j| x_j \leq 0 &&\mathbb{P} \mbox{-} a.e. \ \{\seta,\seta'\} \in \Eta^2,  \label{eq:nac-mip-1}\\
    & -y_i(\seta) + y_i(\seta') - M_i(\seta,\seta') \sum_{j \in [n]} |\seta_j-\seta'_j| x_j \leq 0 &\ \ &\mathbb{P} \mbox{-} a.e. \ \{\seta,\seta'\} \in \Eta^2,  \label{eq:nac-mip-2}\\    
    & x \in \{0, 1\}^{\xd}\\
    & y(\seta) \in \mathbb{R}^{\hat{\yd}}_+ \times \mathbb{Z}^{\yd-\hat{\yd}}_+  &&\mathbb{P} \mbox{-} a.e. \ \seta \in \Eta, \\
    & z(\sxi) \in \mathbb{R}^{\hat{\zd}}_+ \times \mathbb{Z}^{\zd-\hat{\zd}}_+  &&\mathbb{P} \mbox{-} a.e. \ \sxi \in \Xi.
\end{alignat}
\end{subequations}

If the support of the random variables $\rveta$ and $\rvxi$ is finite, then~\eqref{eq:na-mip} is a MIP problem with a finite number of constraints and variables.  Note, however, that from~\eqref{eq:nac-mip-1} and~\eqref{eq:nac-mip-2}, there are two constraints for every \emph{pair} of distinct outcomes of $\rveta$, so this formulation is only tractable if $|\Eta|$ is quite small.
As mentioned in the introduction, some authors such as \citet{boland.et.al:16} have done work to identify a minimal set of nonanticipative constraints~\eqref{eq:nac-mip-1} and~\eqref{eq:nac-mip-2} that still results in a valid formulation of~\eqref{eq:mainprob}, but the number of such constraints may still be extremely large.

\section{Branch-and-Bound Algorithms} \label{sec:Branch_and_Bound}

In this section, we propose two branch-and-bound algorithms to solve~\eqref{eq:mainprob}.  We describe the algorithms under the assumption that the quantity $F(S)$ can be computed exactly for any $S \subseteq [n]$, for example, when the joint support of $(\rveta,\rvxi)$ is finite and not too large. 
We discuss in Section~\ref{sec:Sampling} how sampling can be used to yield statistical approximations when $F(S)$ cannot be computed exactly.

Upper bounds in our branch-and-bound method are based on an information-relaxation concept generalizing the notion of perfect information. Specifically, the relaxations follow from the following observation.

\begin{lemma}
\label{lem:st}
If $S \subseteq T \subseteq [\xd]$, then $F(S) \leq F(T)$.
\end{lemma}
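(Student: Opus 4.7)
The plan is to prove the monotonicity of $F$ by exhibiting, for each $\eta_T$-measurable optimization over $T$, a feasible decision rule that mimics what is optimal when only $S$ is observed, and then invoking the tower property of conditional expectation. Intuitively, observing $\rveta_T$ is always at least as informative as observing $\rveta_S$ when $S \subseteq T$, since $\rveta_S$ is a (coordinate) function of $\rveta_T$. Hence any decision policy that uses only $\rveta_S$ is automatically a feasible policy for the problem where $\rveta_T$ is observed, and in particular so is the one that is pointwise optimal for the $S$-problem.

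More concretely, I would proceed as follows. First, for each realization $\eta_S$ let $y^*(\eta_S) \in Y$ attain the maximum in the definition of $R(\eta_S)$ in \eqref{eq:conditional-recourse}, so that
\begin{equation*}
R(\eta_S) = \beta^\tp y^*(\eta_S) + \Expect_{\rvxi}\Bigl[Q(y^*(\eta_S),\rvxi)\,\big|\,\rveta_S = \eta_S\Bigr].
\end{equation*}
Because $S \subseteq T$, the value $\eta_S$ is determined by $\eta_T$, so the rule $\hat{y}(\eta_T) := y^*(\eta_S)$ is a well-defined $\rveta_T$-measurable mapping into $Y$. In particular, $\hat{y}(\rveta_T)$ is a feasible (but not necessarily optimal) choice in the maximization defining $R(\rveta_T)$, so
\begin{equation*}
R(\rveta_T) \;\geq\; \beta^\tp \hat{y}(\rveta_T) + \Expect_{\rvxi}\Bigl[Q(\hat{y}(\rveta_T),\rvxi)\,\big|\,\rveta_T\Bigr]
\end{equation*}
almost surely.

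Taking expectations over $\rveta_T$ on both sides gives $F(T) \geq \Expect_{\rveta_T}\!\bigl[\beta^\tp y^*(\rveta_S) + \Expect_{\rvxi}[Q(y^*(\rveta_S),\rvxi)\mid \rveta_T]\bigr]$. The key step is then the tower property: conditioning inside-out on the coarser $\sigma$-field generated by $\rveta_S$ yields
\begin{equation*}
\Expect\Bigl[\Expect_{\rvxi}[Q(y^*(\rveta_S),\rvxi)\mid \rveta_T]\,\Big|\,\rveta_S\Bigr] = \Expect_{\rvxi}\bigl[Q(y^*(\rveta_S),\rvxi)\mid \rveta_S\bigr],
\end{equation*}
and since $y^*(\rveta_S)$ is $\rveta_S$-measurable it passes through the outer conditional expectation. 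Combining these identities produces $F(T) \geq \Expect_{\rveta_S}[R(\rveta_S)] = F(S)$, which is the claim.

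The main obstacle is bookkeeping of the $\sigma$-fields: one must justify that $y^*(\cdot)$ can be chosen as a measurable selector (so that $\hat{y}(\rveta_T)$ is a legitimate random decision and the subsequent conditional expectations are well-defined), and then apply the tower property with respect to the inclusion $\sigma(\rveta_S) \subseteq \sigma(\rveta_T)$. In the finite-support setting of \eqref{eq:r_etas_s_eval} the measurable selection is automatic, and in the general setting it follows from standard measurable selection theorems under the paper's blanket relatively complete recourse assumption. Everything else is a routine application of the law of iterated expectations.
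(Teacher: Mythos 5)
Your argument is correct and is essentially the paper's own proof viewed from the policy side: the paper splits $\Expect_{\rveta_T}$ into $\Expect_{\rveta_S}\Expect_{\rveta_{T\setminus S}}$ by the tower property and then uses the interchange $\Expect_{\rveta_{T\setminus S}}[\max_y(\cdot)] \geq \max_y \Expect_{\rveta_{T\setminus S}}[(\cdot)]$, which is precisely your observation that any $\rveta_S$-measurable decision rule is feasible for the $T$-problem. The one substantive difference is that the paper's max--expectation interchange sidesteps the measurable-selection issue you flag at the end, since it holds without requiring the maximum in $R(\eta_S)$ to be attained by a measurable selector $y^*(\cdot)$.
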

\begin{proof}
Using the definition of $F(T)$ we have
\begin{align*}
F(T) &= \Expect_{\rveta_T} \Bigg[ \max_{y \in Y} \Expect_{\rvxi} \Bigl[ \beta^\tp y + Q(y,\rvxi)|\rveta_T  \Bigr] \Bigg] \\
    &= \Expect_{\rveta_S} \Expect_{\rveta_{T \setminus S}} \Bigg[ \max_{y \in Y} \Expect_{\rvxi} \Bigl[ \beta^\tp y + Q(y,\rvxi)|\rveta_S, \rveta_{T \setminus S}  \Bigr] \Bigg] \\
    &\geq \Expect_{\rveta_S}  \max_{y \in Y} \Expect_{\rveta_{T \setminus S}} \Bigg[ \Expect_{\rvxi} \Bigl[ \beta^\tp y + Q(y,\rvxi)|\rveta_S, \rveta_{T \setminus S}  \Bigr] \Bigg] \\
       &= \Expect_{\rveta_S} \Bigg[ \max_{y \in Y}  \Expect_{\rvxi} \Bigl[ \beta^\tp y + Q(y,\rvxi)|\rveta_S  \Bigr] \Bigg] = F(S) .
\end{align*}
\qed
\end{proof}

The set $[\xd]$ includes the index of all entries in $\eta$, so $F([\xd]) \geq \max \{  F(S) : S \subseteq [\xd] \}$. Recalling that $\alpha: 2^{[n]} \rightarrow \mathbb{R}_+$, is a non-negative set function, if we simply drop the term $-\alpha(S)$ in \eqref{eq:mainprob} we obtain an upper bound on the optimal solution value that is known as the \emph{perfect information bound} $F([\xd])$:
\begin{equation*} 
    F([\xd]) \geq \zpesp.
\end{equation*}
On the other hand, probing all components of $\rveta$ is one feasible solution to \eqref{eq:mainprob}, which yields the following lower bound on the optimal value:
\begin{equation*} 
    F([\xd])- \alpha([\xd])  \leq \zpesp.
\end{equation*}

\subsection{Single-Element Branching}
\label{sec:single-element-branching}

In our first branch-and-bound algorithm, after evaluating the upper and lower bounds at a node, a single component of $j \in [n]$ of $\rveta$ is selected, and two new nodes are created, one where $\rveta_j$ is probed, and one where $\rveta_j$ is \emph{not} probed.  Thus, 
a node $P$ in the branch-and-bound tree is defined by a subset of $S^P_1 \subseteq [n]$ of components that are probed and a disjoint subset $S^P_0$ of components that are \emph{not} probed. The following lemma provides lower and upper bounds on the optimal value of the node problem given these restrictions.

\begin{lemma}
\label{lem:svbr}
Let $S_0^P, S_1^P \subseteq [n]$ with $S_0^P \cap S_1^P =\emptyset$.   Then, 
\begin{align*}
F([\xd] \setminus S^P_0)-\alpha([\xd] \setminus S^P_0)  & \leq
\max \{  F(S) -\alpha(S)\, : \, S \supseteq S^P_1, S \cap S^P_0 = \emptyset \} \\ 
& \leq F([\xd] \setminus S^P_0)-\alpha(S^P_1).
\end{align*}
\end{lemma}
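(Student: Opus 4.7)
The plan is to prove the two inequalities separately, each following in a few lines from Lemma~\ref{lem:st} (monotonicity of $F$) and the assumed monotonicity of the cost function $\alpha$.

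For the left inequality, I would exhibit an explicit feasible choice in the maximization. Specifically, the set $S := [\xd] \setminus S^P_0$ satisfies $S \cap S^P_0 = \emptyset$ trivially, and $S \supseteq S^P_1$ since $S^P_1 \cap S^P_0 = \emptyset$ implies $S^P_1 \subseteq [\xd] \setminus S^P_0$. Hence $S$ is feasible for the maximization, giving
\[ F([\xd] \setminus S^P_0) - \alpha([\xd] \setminus S^P_0) \leq \max \{ F(S) - \alpha(S) : S \supseteq S^P_1, S \cap S^P_0 = \emptyset \}. \]

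For the right inequality, I would take an arbitrary feasible $S$ in the maximization and bound $F(S) - \alpha(S)$ from above. Since $S \cap S^P_0 = \emptyset$, we have $S \subseteq [\xd] \setminus S^P_0$, so Lemma~\ref{lem:st} yields $F(S) \leq F([\xd] \setminus S^P_0)$. Since $S \supseteq S^P_1$ and $\alpha$ is monotone-increasing, $\alpha(S) \geq \alpha(S^P_1)$, and therefore $-\alpha(S) \leq -\alpha(S^P_1)$. Adding these gives $F(S) - \alpha(S) \leq F([\xd] \setminus S^P_0) - \alpha(S^P_1)$; taking the max over feasible $S$ preserves the inequality.

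There is really no main obstacle here: both inequalities are immediate once one recognizes that $[\xd] \setminus S^P_0$ is the largest feasible probing set in the subproblem while $S^P_1$ is the smallest, and then invokes monotonicity of $F$ (from Lemma~\ref{lem:st}) and of $\alpha$. The only subtlety worth noting is that the two bounds are attained by potentially different arguments of the max: the lower bound is attained at $S = [\xd] \setminus S^P_0$, while the upper bound is the sum of term-wise bounds that need not be simultaneously achievable, which is exactly what makes this a relaxation useful for branch-and-bound rather than an exact evaluation.
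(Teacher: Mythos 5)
Your proposal is correct and follows essentially the same argument as the paper: the lower bound comes from noting that $S = [\xd] \setminus S^P_0$ is feasible for the maximization, and the upper bound combines monotonicity of $\alpha$ with Lemma~\ref{lem:st}. The extra detail you supply (deriving $S^P_1 \subseteq [\xd]\setminus S^P_0$ from disjointness) is a welcome clarification but does not change the route.
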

\begin{proof}
The lower bound is obtained by observing that the set $S = [\xd] \setminus S^P_0$ satisfies $S \supseteq S^P_1$ and $S \cap S^P_0 = \emptyset$. The upper bound follows from the monotonicity of $\alpha$, $\alpha(S^P_1) \leq \alpha(S)$ for any  $S \supseteq S^P_1$, and by applying Lemma~ \ref{lem:st} to conclude that $F([\xd] \setminus S^P_0) \geq F(S)$ for any $S$ with $S \cap S^P_0 = \emptyset$. \qed
\end{proof}



\def\textA{{\scriptsize$S_0 = \emptyset, \ S_1 = \emptyset$}
\nodepart{two}
{\scriptsize$\text{ub} = \alpha(\emptyset) + F(\{1,2,3\})$}\\
{\scriptsize$\text{lb} = \alpha(\{1,2,3\}) + F(\{1,2,3\})$}\\
}

\def\textB{{\scriptsize$S_0 = \{ 2\}, \ S_1 = \emptyset$}
\nodepart{two}
{\scriptsize$\text{ub} = \alpha(\emptyset) + F(\{1,3\})$}\\
{\scriptsize$\text{lb} = \alpha(\{1,3\}) + F(\{1,3\})$}\\
}

\def\textC{{\scriptsize$S_0 = \emptyset, \ S_1 = \{ 2\}$}
\nodepart{two}
{\scriptsize$\text{ub} = \alpha(\{2\}) + F(\{1,2,3\})$}\\
{\scriptsize$\text{lb} = \alpha(\{1,2,3\}) + F(\{1,2,3\})$}\\
}

\def\textD{{\scriptsize$S_0 = \{ 1,2\}, \ S_1 = \emptyset$}
\nodepart{two}
{\scriptsize$\text{ub} = \alpha(\emptyset) + F(\{3\})$}\\
{\scriptsize$\text{lb} = \alpha(\{3\}) + F(\{3\})$}\\
}

\def\textE{{\scriptsize$S_0 = \{ 2\}, \ S_1 = \{ 1\}$}
\nodepart{two}
{\scriptsize$\text{ub} = \alpha(\{1\}) + F(\{1,3\})$}\\
{\scriptsize$\text{lb} = \alpha(\{1,3\}) + F(\{1,3\})$}\\
}

\tikzset{>=stealth,parent node/.style={rectangle split, rectangle split parts=2,align=center,text width=4.2cm,draw,node distance=.6cm and -1.9cm}}

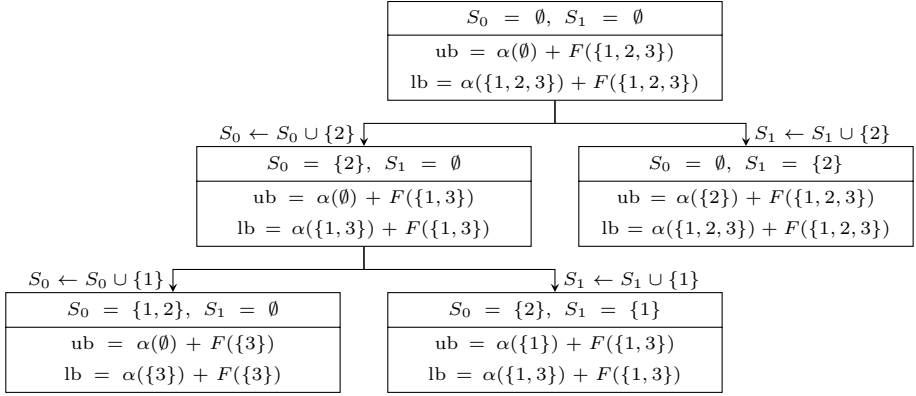
\begin{figure}
\centering
\begin{tikzpicture}
\node[parent node](N0){\textA};
\node[parent node,below left =of N0](N1){\textB};
\node[parent node,below right =of N0](N2){\textC};

\node[parent node,below left =of N1](N3){\textD};
\node[parent node,below right=of N1](N4){\textE};

\draw[->](N0.south)--+(0,-0.3)-|(N1)node[left,near end]{\scriptsize $S_0 \leftarrow S_0 \cup \{2\}$};
\draw[->](N0.south)--+(0,-0.3)-|(N2)node[right,near end]{\scriptsize $S_1 \leftarrow S_1 \cup \{2\}$};

\draw[->](N1.south)--+(0,-0.3)-|(N3)node[left,near end]{\scriptsize  $S_0 \leftarrow S_0 \cup \{1\}$};
\draw[->](N1.south)--+(0,-0.3)-|(N4)node[right,near end]{\scriptsize $S_1 \leftarrow S_1 \cup \{1\}$};

\end{tikzpicture}
\caption{Partial Bound Calculations for $n=3$}
\label{fig:b&b_example}
\end{figure}



      

After evaluating node $P$ of the branch-and-bound tree, if its upper bound is not smaller than the value of a known feasible solution, then we must \emph{branch}, by selecting an unfixed component $j \in [n] \setminus S_1^P \setminus S_0^P$, and creating two child nodes $P^+$ and $P^-$, with 
\begin{alignat*}{2}
S_1^{P^+} &:= S_1^P \cup \{j\} \quad & S_0^{P^+} & := S_0^P, \\
S_1^{P^-} &:= S_1^P \quad & S_0^{P^-} & := S_0^P \cup \{j\}.\\
\end{alignat*}

The single-element branch-and-bound algorithm is initialized with a root node $A$ defined by $S_0^A = S_1^A = \emptyset$ and then proceeds to create more nodes via the single-element branching procedure. Figure \ref{fig:b&b_example} presents an example of a partial branch-and-bound tree with the bound calculations for an instance with $n=3$. 
Note that if $S_0^P \cup S_1^P = [n]$ then the lower and upper bounds in Lemma \ref{lem:svbr} coincide. Thus, there will be at most $2^n$ leaf nodes in the tree, so the branch-and-bound algorithm is finite.  An important aspect of the proposed bounding scheme is that evaluating the bounds for the child node where the probing candidate \emph{must} be probed (it is added to the set $S^{P^+}_1$) requires only an evaluation of $\alpha()$, since all other components of the bounds were computed at the parent node. 

\subsubsection*{Branching variable selection}


The choice of branching variable index among the unfixed components $C := [n] \setminus S_1^P \setminus S_0^P$ can significantly impact the size of the search tree, and we would like to select a candidate for which the upper bounds of the two child nodes decrease significantly. For each $j \in C$, we seek to estimate the bound that will be obtained if we create child nodes $P^+$ and $P^-$ using $j$ as the branching variable.
For the node $P^+$, the computed upper bound changes by 
\[ \Delta_j^+ :=  \alpha(S^P_1) - \alpha(S^P_1 \cup \{j\}) \leq 0. \]
By evaluating $\alpha(S_1^P \cup \{j\})$ for each unfixed variable $j \in C$, we can decide which unfixed variable will have the most impact for the node $P^+$. 
For the node $P^-$, the computed upper bound changes by 
\[ \Delta_j^- := F([\xd] \setminus S^P_0) -  F([\xd] \setminus (S^P_0 \cup \{j\})).  \]
Thus, we could like to estimate the impact that \emph{not} knowing the outcome of the random variable $\bfeta_j$ will have on the upper bound, relative to the current upper bound.
Letting $S = [n] \setminus S_0^P$, the first term in the definition of $\Delta_j^-$ is $F(S) = \Expect_{\rveta_S} [R(\rveta_{S})]$ and has been calculated (or estimated by sampling) at the current node, but computing $F([\xd] \setminus (S^P_0 \cup \{j\}))$ for each $j \in C$ would be computationally demanding. Thus, we instead seek to use information obtained when evaluating $F(S)$ to estimate the relative importance of knowing each random component $\rveta_j$ in terms of its impact on the upper bound. Thus, rather than use $\Delta_j^-$ exactly, we use an estimate $\hat{\Delta}_j^-$ and consider two options for this estimate.

The first option is applicable only in the case where each $\rveta_j$ is a Bernoulli random variable. In this option, we define $\hat{\Delta}_j^-$ to be an approximation of the value
\[  \Expect_{\rveta_S} [R(\rveta_{S})|\rveta_j = 1] - \Expect_{\rveta_S} [R(\rveta_{S})|\rveta_j = 0]. \]
In the second option, which applies generally, we define $\hat{\Delta}_j^-$ to be an approximation of the covariance between $\rveta_j$ and $R(\rveta_S)$. In the case that $F(S)$ is estimated via sampling as described in Section \ref{sec:Sampling}, these estimates can be computed using the same information used to evaluate $F(S)$. The details are described in the Appendix. 

To combine the $\Delta^+_j$ and $\hat{\Delta}^-_j$ values into a score for each candidate $j \in C$ we first normalize them as follows
\[ 
\zeta^{+}_{j} = \frac{\Delta^+_j - \min_{j' \in C} \Delta^+_{j'}}{\max_{j' \in C} \Delta^+_{j'} - \min_{j' \in C} \Delta^+_{j'}}, \quad 
\zeta^{-}_{j} = \frac{\hat{\Delta}^-_j - \min_{j' \in C} \hat{\Delta}^-_{j'}}{\max_{j' \in C} \hat{\Delta}^-_{j'} - \min_{j' \in C} \hat{\Delta}^-_{j'}},
\]
then define 
\begin{equation}
    \label{eq:def_psi}
    \psi_j := \zeta_j^+ + \zeta_j^-,
\end{equation} 
and finally choose the branching variable $j^*$ according to $ j^{\ast} = \mathbf{argmax}_{j \in C} \psi_j.$

\subsection{Multi-element branching}
\label{sec:multi-element-branching}

In our second branch-and-bound algorithm, we characterize a node $P$ by a subset $S^P_0 \subseteq [n]$ of components of $\rveta$ that will \emph{not} be probed, and a collection of disjoint subsets $\mathcal{S}^P = \{S^P_k : k \in [n_P]\}$, where for each $k \in [n_P]$ \emph{at least one} of the components of $\rveta_{S^P_k}$ will be probed. This method requires the additional assumption that $\alpha(\cdot)$ is modular:  $\alpha(S) = \sum_{i \in S} \alpha(\{i\})$ for all $S \subseteq [n]$.

The following lemma provides lower and upper bounds on the optimal value of the node problem given these restrictions.
\begin{lemma}
\label{lem:subsetbr}
Assume that $\alpha(\cdot)$ is modular. Let $S_0^P \subseteq [n]$, and $\mathcal{S}^P = \{S^P_k : k \in [n_P]\}$ be such that $S_i^P \cap S_j^P = \emptyset \ \forall i\neq j \in [n_P] \times [n_P]$ and $S_0^P \cap S_i^P = \emptyset \ \forall i \in [n_P]$. Then, 
\begin{align*}
F([\xd] \setminus S^P_0) & -\alpha([\xd] \setminus S^P_0)\\
& \leq
\max \{ F(S) -\alpha(S) \, : \, S \cap S^P_0 = \emptyset, S \cap S^P_k \neq \emptyset, k \in [n_P] \} \\ 
& \leq F([\xd] \setminus S^P_0) - \sum_{k \in [n_P]} \min \{ \alpha_j : j \in S^P_k \} .
\end{align*}
\end{lemma}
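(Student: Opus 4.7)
The plan is to establish the two inequalities separately. Both will follow by combining Lemma~\ref{lem:st} (monotonicity of $F$) with elementary observations about $\alpha$.

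For the lower bound, I would exhibit a specific feasible set. The natural candidate is $S := [\xd] \setminus S_0^P$: it trivially satisfies $S \cap S_0^P = \emptyset$, and since each $S_k^P$ is disjoint from $S_0^P$ we have $S_k^P \subseteq [\xd] \setminus S_0^P = S$, hence $S \cap S_k^P = S_k^P \neq \emptyset$ for every $k \in [n_P]$. This $S$ is therefore admissible in the maximization, and substituting it yields $F([\xd]\setminus S_0^P) - \alpha([\xd]\setminus S_0^P)$ as a lower bound on the max.

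For the upper bound, I would fix any admissible $S$ (that is, $S\cap S_0^P=\emptyset$ and $S\cap S_k^P\neq\emptyset$ for all $k\in[n_P]$) and bound $F(S)-\alpha(S)$ term-by-term. For the $F$ part, $S \subseteq [\xd]\setminus S_0^P$, so Lemma~\ref{lem:st} gives $F(S)\leq F([\xd]\setminus S_0^P)$. For the $\alpha$ part, the modularity assumption lets me write $\alpha(S)=\sum_{j\in S}\alpha(\{j\})$. Since the sets $S_k^P$ are pairwise disjoint and each is disjoint from $S_0^P$, the intersections $\{S\cap S_k^P\}_{k\in[n_P]}$ are pairwise disjoint subsets of $S$, so
\[
\alpha(S) \;\geq\; \sum_{k\in[n_P]} \sum_{j\in S\cap S_k^P} \alpha(\{j\}) \;\geq\; \sum_{k\in[n_P]} \min\{\alpha(\{j\}) : j\in S_k^P\},
\]
where the last step uses $S\cap S_k^P\neq\emptyset$. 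Combining the two bounds and maximizing over admissible $S$ completes the argument.

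I do not expect any serious obstacle: the only subtle point is making sure that the disjointness assumptions on $\mathcal{S}^P$ and $S_0^P$ are used correctly so that the intersections $S\cap S_k^P$ can be summed without double counting, and this is exactly where modularity of $\alpha$ (which replaces a general subadditive/supermodular argument) makes the accounting trivial. The structure of the proof essentially mirrors that of Lemma~\ref{lem:svbr}, with the single-element constraints $S \supseteq S_1^P$ replaced by the weaker ``at least one from each $S_k^P$'' constraint.
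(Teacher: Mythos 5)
Your proposal is correct and follows essentially the same route as the paper's proof: the lower bound via the feasible set $S = [\xd]\setminus S_0^P$, and the upper bound by combining Lemma~\ref{lem:st} for the $F$ term with monotonicity/modularity (plus nonnegativity) of $\alpha$ for the cost term. The only difference is that you spell out the disjoint-intersection accounting that the paper compresses into the phrase ``by the monotonicity and modularity of $\alpha$,'' which is a welcome bit of extra detail but not a different argument.
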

\begin{proof}
The lower bound is obtained by observing that the set $S = [\xd] \setminus S^P_0$ satisfies $S \cap S^P_k \neq \emptyset$ for $k \in [n_P]$ and $S \cap S^P_0 = \emptyset$.  By the monotonicity and modularity of $\alpha$,  
\[ \sum_{k\in [n_P]} \min\{ \alpha_j : j \in S^P_k \} \leq \alpha(S) \]
for any  $S $ with $S \cap S^P_k \neq \emptyset$ for all $k \in [n_P]$.  Using Lemma \ref{lem:st} to conclude that $F([\xd] \setminus S^P_0) \geq F(S)$ for any $S$ with $S \cap S^P_0 = \emptyset$ yields the desired upper bound. \qed
\end{proof}

To begin the multi-element branching tree, the root node $A$ is created and evaluated with $S_0^A = \emptyset, \mathcal{S}^A = \emptyset$, which yields the perfect information bound.  The first branch creates two nodes $L$ and $R$ enforcing the dichotomy that either no elements of $\rveta$ are probed ($S_0^L = [n], \mathcal{S}^L = \emptyset$), or at least one component of $\rveta$ is probed, ($S_0^R = \emptyset, \mathcal{S}^R = \{ [n] \}$).
At all subsequent nodes of the branch-and-bound tree, branching on an active node $P$ is done by selecting a subset element $T \in \mathcal{S}^P$ and a subset $K \subset T$.  Since at least one element of $T$ must be selected for probing, we know the following trichotomy: 
\begin{enumerate}
    \item If no elements of $K$ are probed, then at least one element of $T \setminus K$ must be probed;
    \item If no elements of $T \setminus K$ are probed, then at least one element of $K$ must be probed;
    \item Otherwise at least one element from both $T$ and $T \setminus K$ must be probed.
\end{enumerate}
This translates into three new child nodes $P_1$, $P_2$, $P_3$ of $P$ defined by 
\begin{enumerate}
    \item $S_0^{P_1} = S_0^P \cup K.  \quad \mathcal{S}^{P_1} = \mathcal{S}^P \setminus \{T\} \cup \{T \setminus K\}$.
    \item $S_0^{P_2} = S_0^P \cup (T \setminus K).  \quad \mathcal{S}^{P_2} = \mathcal{S}^P \setminus \{T\} \cup \{ K\}$.
    \item $S_0^{P_3} = S_0^P. \quad \mathcal{S}^{P_3} = \mathcal{S}^P \setminus \{T\} \cup \{ K\} \cup \{T \setminus K\}$.
\end{enumerate}

Figure \ref{fig:b&b_multibranch_example} presents an example of a partial branch-and-bound tree obtained using multi-element branching for an example with $n=5$. Compared to single-element branching, this approach tends to lead to subproblems with more elements in the set $S_0^P$ higher in the tree, which may improve the bounds more quickly as the quantity $F([n] \setminus S_0^P)$ is a key component of the node bounds.


\def\textMultiA{{\scriptsize$S_0 = \emptyset,$\\$\mathcal{S} = \emptyset$}
}

\def\textMultiB{{\scriptsize$S_0 = \{1,2,3,4,5\},$\\$\ \mathcal{S} = \emptyset$}
}

\def\textMultiD{{\scriptsize$S_0 = \emptyset,$\\$\ \mathcal{S} = \{\{ 1,2,3,4,5\}\}$}
}

\def\textMultiE{{\scriptsize$S_0 = \{1,3,5\},$\\ $\mathcal{S} = \{\{2,4\}\}$}
}

\def\textMultiF{{\scriptsize$S_0 = \{2,4\},$\\ $\mathcal{S} = \{\{1,3,5\}\}$}
}

\def\textMultiG{{\scriptsize$S_0 = \emptyset,$\\$\mathcal{S} = \{\{1,3,5\}, \{2,4\}\}$}
}

\def\textMultiH{{\scriptsize$S_0 = \{1,2,4,5\},$\\ $\mathcal{S} = \{\{3\}\}$}}
\def\textMultiI{{\scriptsize$S_0 = \{2,3,4\},$\\ $\mathcal{S} = \{\{1,5\}\}$}}
\def\textMultiJ{{\scriptsize$S_0 = \{2,4\},$\\ $\mathcal{S} = \{\{1,5\}, \{3\}\}$}}

\tikzset{>=stealth,parent node/.style={align=center,text width=0.22\textwidth,draw,node distance=.8cm and 0cm}}

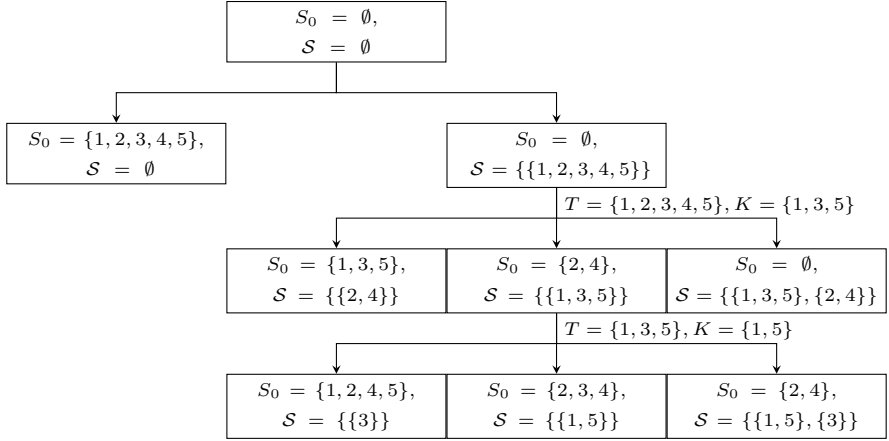
\begin{figure}
\centering
\begin{tikzpicture}
\node[parent node](N0){\textMultiA};
\node[parent node,below left =of N0](N1){\textMultiB};
\node[parent node,below right =of N0](N3){\textMultiD};

\node[parent node,below left =of N3](N4){\textMultiE};
\node[parent node,below =of N3](N5){\textMultiF};
\node[parent node,below right =of N3](N6){\textMultiG};

\node[parent node,below left =of N5](N7){\textMultiH};
\node[parent node,below =of N5](N8){\textMultiI};
\node[parent node,below right =of N5](N9){\textMultiJ};

\draw[->](N0.south)--+(0,-0.4)-|(N1)node[left,near end]{};
\draw[->](N0.south)--+(0,-0.4)-|(N3)node[right,near end]{};

\draw[->](N3.south)--+(0,-0.4) node[right, yshift=5pt] {\scriptsize $T = \{1,2,3,4,5\}, K=\{1,3,5\}$} -| (N4)node{};
\draw[->](N3.south)--+(0,-0.4)-|(N5)node[right,near end]{};
\draw[->](N3.south)--+(0,-0.4)-|(N6)node[right,near end]{};

\draw[->](N5.south)--+(0,-0.4) node[right, yshift=5pt] {\scriptsize $T = \{1,3,5\}, K=\{1,5\}$} -| (N7)node{};
\draw[->](N5.south)--+(0,-0.4)-|(N8)node[right,near end]{};
\draw[->](N5.south)--+(0,-0.4)-|(N9)node[right,near end]{};

\end{tikzpicture}
\caption{Example partial branch-and-bound tree with $n = 5$ multi-element branching.}
\label{fig:b&b_multibranch_example}
\end{figure}

\subsubsection*{Branching decisions}

Branching in the multi-element branching method requires two decisions.  A branching set $T \in {\mathcal{S}^P}$ must be selected, and the elements of $T$ must be partitioned.  In our implementation, we select the branching set $T$ from ${\mathcal{S}^P}$ as the set with the largest cardinality.
To partition the elements of the branching set $T$, we use the importance estimates $\psi_j$ introduced in equation~\eqref{eq:def_psi}.  The set is partitioned 
by choosing $K$ to heuristically minimize the difference $|\sum_{j \in K} \psi_j - \sum_{j \in T \setminus K} \psi_j|$, which tries to ensure that both child nodes are of ``equal'' importance. Specifically, we sort the elements of $T$ in decreasing order of their scores $\psi_j$. Let $j_1, j_2, \dots, j_{|T|}$ be the elements of $T$ sorted such that $\psi_{j_1} \geq \psi_{j_2} \geq \dots \geq \psi_{j_{|T|}}$. Then we partition $T$ into $K$ and $T \setminus K$  by setting $K= \{j_k : k \text{ is odd}\}$.

\section{Sampling}
\label{sec:Sampling}

In many applications, the joint support of the random variables $(\rveta, \rvxi)$ will be too large allow for the exact evaluation of $F(S)$. We present two methods for estimating an upper bound on $\zpesp$ using sampling, which vary according to whether the branch-and-bound algorithm is run on an approximation built using a single sample (external sampling -- Section \ref{sec:extsamp}) or whether sampling is done repeatedly throughout the branch-and-bound algorithm (internal sampling -- Section \ref{sec:intsamp}). We also describe in Section \ref{sss:slb} how sampling can be used to estimate a lower bound on $\zpesp$.

\newcommand{\xrub}{\mathbf{R}_N}
\newcommand{\xfub}{\mathbf{F}_N}

\subsection{External Sampling---Sample Average Approximation}
\label{sec:extsamp}

A classical way to obtain bounds on the optimal solution value of a stochastic program is via sample-average approximation (SAA).  In SAA, $(\seta^k,\sxi^k)$, for $k \in [N]$ are jointly randomly sampled according to the distribution $\mathbb{P}$, and an approximation to the problem assuming each of these scenarios is equally likely is created. 


Consider a fixed sample and let   $S \subseteq [n]$. Let $\hat{\Eta}_S = \bigcup_{k=1}^N \seta^k_S$ be the set of unique subvectors of $\seta^k$ in the sample. For each $\seta_S \in \hat{\Eta}_S$, define $\Omega_N(\seta_S)=\{ k \in [N] : \seta^k_S = \seta_S \}$. 
Then, on this random sample, the sample estimate $\xfub(S)$ of $F(S)$ is computed by solving the following two-stage stochastic program for each $\seta_S \in \hat{\Eta}_S$: 
\begin{equation}
\label{eq:rncalc_saa}
\xrub(\seta_S) = \max_{y \in Y} \beta^\tp y + |\Omega_N(\seta_S)|^{-1} \sum_{k \in \Omega_N(\seta_S)} Q(y,\sxi^k)
\end{equation}
where $Q(y,\sxi^k)$ is defined in \eqref{eq:recourse-function}.
The value $\xfub(S)$ is then computed as
\[ \xfub(S) = 
{N}^{-1} \sum_{\seta_S \in \hat{\Eta}_S} |\Omega_N(\seta_S)| \xrub(\seta_S). \]
Finally, the SAA optimal value for a given sample is defined as
\begin{equation}
\label{eq:saa}
 \bfv_N = \max\{  \xfub(S) - \alpha(S) : S \subseteq [n] \}.
\end{equation}

Since $\xfub(S)$ can be computed for any $S \subseteq [n]$ by solving a collection of two-stage stochastic programs with finite support, the sampled approximation problem \eqref{eq:saa} can be solved using the branch-and-bound method of Section~\ref{sec:Branch_and_Bound}.  Any method can be used to solve each of the two-stage stochastic programs \eqref{eq:rncalc_saa}. In our implementation, we use a commercial MIP solver to solve the extensive form \cite{SPbook_birge1997}, but decomposition methods such as the L-shaped algorithm \cite{lshaped:siam69,laporte1993integer} or dual decomposition \cite{caroe1999dual} may also be applied when applicable.


The optimal solution value of the sampled instance $\bfv_N$ is an outcome of a random variable, as it depends on the randomly drawn sample of size $N$.  It is well-known that the expected value of the random variable $\bfv_N$ provides a biased estimate of the true objective value: 
$\mathbb{E}[ \bfv_N ] \geq \zpesp$.
In order to estimate $\Expect[\bfv_N]$, and hence estimate an upper bound of $\zpesp$, we can replicate the computations, as suggested by \citet{mak.morton.wood:99}.
Specifically, we draw $L$ independent batches of joint samples $(\seta^{k,\ell},\sxi^{k,\ell})$, for $k \in [N], \ell \in [L]$ from the distribution $\mathbb{P}$.  
Note that since the sampled elements \emph{within} a batch are not required to be independent, variance reduction techniques such as Latin hypercube sampling (LHS) \citep{McKay1979LHS,Freimer2012LHS} could be employed to generate the sample for each fixed batch $\ell \in [L]$.
We define $\bfv^\ell_N$ to be the optimal solution value of~\eqref{eq:saa} coming from the $\ell$th batch of samples and define the average of the $\bfv^\ell_N$ values as
\begin{equation}
\label{eq:saa_avg_val}
\mathbf{\Upsilon}_{N,L} := L^{-1} \sum_{\ell \in [L]} \bfv^\ell_N 
\end{equation}
The estimator $\mathbf{\Upsilon}_{N,L}$ is an unbiased estimate of $\Expect[ \bfv_N]$
and by the Central Limit Theorem, we know that 
\[ 
    \sqrt{L}(\mathbf{\Upsilon}_{N,L} - \Expect[\bfv_N]) \Rightarrow \mathsf{N}(0,\operatorname{Var}(\bfv_N)) \mbox{ as } L \rightarrow \infty.
\]
The sample estimator of the variance $\operatorname{Var}(\bfv_N)$ is
\begin{equation}
\label{eq:saa_var_val}
\mathbf{s}^2_{N,L} := (L-1)^{-1} \sum_{\ell \in [L]} (\bfv_N^\ell - \Upsilon_{N,L})^2, 
\end{equation}
which can be used to compute an approximate $(1-\alpha)$ confidence upper bound of $\Expect[\bfv_N]$ as 
\begin{equation}
\label{eq:saa_ci_ub}
    \mathbf{\Upsilon}_{N,L} + L^{-1/2} t_{L-1,\alpha} \mathbf{s}_{N,L},
\end{equation}
where $t_{L-1,\alpha}$ is the $\alpha$ critical value of the Student's t distribution with $L-1$ degrees of freedom.
It is also shown in \citet{mak.morton.wood:99} that 
$\mathbb{E} \bfv_N  \geq \mathbb{E} \bfv_{N+1} \geq \zpesp$, so increasing the sample size $N$ reduces the bias of the estimate of $\zpesp$.  

\subsubsection*{Re-using information.}
When running the branch-and-bound algorithm with a fixed sample $(\seta^k,\sxi^k)_{k=1}^N$ it frequently occurs that the subset $\Omega_N(\eta_S)$ used in the calculation of $\xrub(\seta_S)$ is identical to the subset $\Omega_N(\eta'_{S'})$ for some previously observed $\eta'_{S'} \neq \eta_{S}$. For example, at nodes in which the upper bound is computed based on a set $S$ that consists of many elements of $[n]$ (as often happens in early nodes in the branch-and-bound tree) the sets $\Omega_N(\eta_S)$ have small cardinality, or may even be singletons, and hence repeat often. 
Thus, we suggest storing the computed values of $\xrub(\seta_S)$ in a hash table with the key defined by a binary encoding of the set $\Omega_N(\eta_S)$. Thus, any time we need to evaluate $\xrub(\seta_S)$ we first determine the set $\Omega_N(\eta_S)$ and check this hash table to see if the the required two-stage stochastic program has already been solved.  In Section~\ref{sec:hashing_good}, we report on the significant computational savings that occur from this simple observation.   

\subsection{Internal Sampling}
\label{sec:intsamp}

A limitation of the external sampling approach is that within a single replication, the same sample is used to estimate $F(S)$ for all $S$ that are encountered within the branch-and-bound algorithm, which prevents taking advantage of the specific subset of observed elements of $\rveta_S$ when estimating $F(S)$. In particular, if the support of $\rveta_S$ is infinite then the set $\hat{\Eta}_S$ of unique vectors $\eta^k_S$ in any randomly-drawn finite sample will include the full sample so that the set $\Omega_N(\seta_S)$ is a singleton for each $\seta_S \in \hat{\Eta}_S$. This implies that the estimate $F_N(S)$ will revert to the perfect information bound for {\it any finite sample} $N$, making it impossible for the external sampling approach to improve upon the perfect information bound. Even if the support of $\rveta_S$ is finite but very large, the same reasoning suggests that the bias of the lower bound $\bfv_N$ used in the external sampling approach may be very large.

To overcome this drawback, sampling can be done separately to estimate an upper bound on each node within the branch-and-bound search. This approach is referred to as {\it internal} sampling because the sampling is done internal to the branch-and-bound search, and can be considered an adaptation of the stochastic branch-and-bound algorithm of \cite{norkin1998branch} to our probing enhanced stochastic programming model. The advantage of internal sampling in this setting is that we can then do {\it conditional} sampling in a nested fashion. To estimate bounds at a node $P$ within the branch-and-bound algorithm we require an estimate of an upper bound of $F([n] \setminus S_0^P)$. A lower bound on the optimal value is obtained by estimating a lower bound of $F(S)$ for a candidate solution $S$. We discuss these cases separately, and also discuss how to obtain an estimate of a global upper bound from a branch-and-bound tree.

\subsubsection{Statistical Upper Bounds on $F(S)$}
\label{sss:sub}



Consider a node in a branch-and-bound tree with associated non-probed set $S_0^P$ and define $S = [n]\setminus S_0^P$.  The main computational task is to estimate a statistical upper bound on $F(S)$.

Let $\eta_S^{k}, k \in [N_1]$ be a sample of $\rveta_S$, and for each observation $\eta_S^k$,  let $\xi^{ki}, i \in [N_2]$ be a {\it conditional} sample of $\rvxi$ taken from the conditional distribution of $\rvxi$ given $\rveta_S = \eta_S^k$.  Figure \ref{fig:intsamp} illustrates this nested sampling procedure.
\begin{figure}
\centering
\caption{Internal Sampling}\label{fig:intsamp}
\begin{forest}
for tree={grow'=0, parent anchor=east, child anchor=west, edge={draw=black}, l sep=20mm}
[
  [$\eta_S^1$
    [{$\xi_1 | \rveta_S = \eta_S^1$}]
    [\vdots, no edge]
    [{$\xi_{N_2} | \rveta_S = \eta_S^1$}]
  ]
  [\vdots, no edge, before drawing tree={draw tree dots between={1}{2}}]
  [\vdots, no edge, before drawing tree={draw tree dots between={1}{2}}]
  [$\eta_S^{N_1}$
    [{$\xi_1 | \rveta_S = \eta_S^{N_1}$}]
    [\vdots, no edge]
    [{$\xi_{N_2} | \rveta_S = \eta_S^{N_1}$}]
  ]
]
\end{forest}
\end{figure}
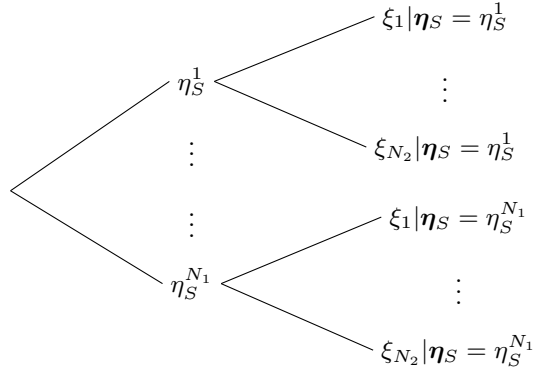

\newcommand{\fub}{\mathbf{\bar{F}}_{N_1,N_2}}
\newcommand{\rub}{\mathbf{\bar{R}}_{N_2}}

An estimate of $F(S)$ is then obtained as
\begin{equation}
\label{eq:fubdef}
 \fub(S) = N_1^{-1} \sum_{k \in [N_1]} \rub(\eta_S^k) 
\end{equation}
where
\[ \rub(\eta_S^k) = \max_{y \in Y} \beta^{\top} y + N_2^{-1} \sum_{i \in [N_2]} Q(y, \xi^{ki}) . \]
Note that computing this estimate $\fub(S)$ requires solving $N_1$ two-stage stochastic programs, each with a sample of size $N_2$

The following result demonstrates that this point estimate of $F(S)$ can provide the basis of a statistical upper bound estimate.
\begin{lemma}
Assume that $\eta^1,\ldots,\eta^{N_1}$ are identically distributed and for each $k \in [N_2]$ the sample $\xi^{k1},\ldots,\xi^{kN_2}$ are identically distributed conditional on $\rveta_S = \eta_S^k$. Then,
\[ \Expect_{\eta^{N_1},\xi^{N_2}}[ \fub(S) ] \geq F(S), \]
where $\Expect_{\eta^{N_1},\xi^{N_2}}$ indicates the expectation is taken with respect to the samples.
\end{lemma}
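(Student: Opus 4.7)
The plan is to establish the inequality in two stages corresponding to the two layers of sampling: first condition on the outer sample $\eta^1_S,\ldots,\eta^{N_1}_S$ and handle the inner (conditional) sample, then take expectation over the outer sample. The key tool is the classical interchange inequality $\max_y \mathbb{E}[g(y,\bfv)] \le \mathbb{E}[\max_y g(y,\bfv)]$ that underlies the standard SAA upper-bound result, applied here inside the conditional expectation given $\rveta_S = \eta_S^k$.

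First I would show the per-block inequality
\[
\Expect\bigl[\rub(\eta_S^k) \,\big|\, \eta_S^k \bigr] \;\geq\; R(\eta_S^k).
\]
To see this, fix any $y \in Y$. Since $\xi^{k1},\ldots,\xi^{kN_2}$ are identically distributed conditional on $\rveta_S = \eta_S^k$, each having the same conditional distribution as $\rvxi \mid \rveta_S = \eta_S^k$, linearity of expectation gives
\[
\Expect\Bigl[\beta^\tp y + N_2^{-1}\sum_{i \in [N_2]} Q(y,\xi^{ki}) \,\Big|\, \eta_S^k\Bigr] \;=\; \beta^\tp y + \Expect_{\rvxi}\bigl[Q(y,\rvxi) \mid \rveta_S = \eta_S^k\bigr].
\]
Because $\rub(\eta_S^k)$ is the maximum over $y \in Y$ of the integrand, it dominates the expression inside the expectation for every fixed $y$, so taking conditional expectation and then maximizing over $y$ on the right-hand side yields
\[
\Expect\bigl[\rub(\eta_S^k) \,\big|\, \eta_S^k\bigr] \;\geq\; \max_{y \in Y} \Bigl\{ \beta^\tp y + \Expect_{\rvxi}\bigl[Q(y,\rvxi) \mid \rveta_S = \eta_S^k\bigr] \Bigr\} \;=\; R(\eta_S^k),
\]
using the definition~\eqref{eq:conditional-recourse} of $R(\cdot)$.

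Next, applying the tower property and the definition~\eqref{eq:fubdef} of $\fub(S)$, I would write
\[
\Expect[\fub(S)] \;=\; N_1^{-1} \sum_{k \in [N_1]} \Expect\bigl[\,\Expect[\rub(\eta_S^k) \mid \eta_S^k]\,\bigr] \;\geq\; N_1^{-1} \sum_{k \in [N_1]} \Expect[R(\eta_S^k)].
\]
Since the $\eta^k$ are identically distributed, each $\eta_S^k$ has the same marginal distribution as $\rveta_S$, so $\Expect[R(\eta_S^k)] = \Expect_{\rveta_S}[R(\rveta_S)] = F(S)$ for every $k$, and the sum collapses to $F(S)$, completing the argument.

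The argument is essentially the standard SAA upper-bound result lifted to a two-level setting, and I do not anticipate a genuine obstacle: the main care is in being explicit about the conditioning structure (the $\xi^{ki}$ are only assumed to be identically distributed conditional on $\eta_S^k$, not across $k$), and in applying the max-expectation interchange at the correct level. As long as the conditional sampling assumption is used to justify the interchange within each $k$, the outer step is a direct consequence of identical distribution of the $\eta^k_S$.
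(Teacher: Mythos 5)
Your proposal is correct and follows essentially the same route as the paper: establish the per-block bias inequality $\Expect[\rub(\eta_S^k)\mid\eta_S^k]\geq R(\eta_S^k)$, then apply the tower property and the identical distribution of the $\eta_S^k$ to collapse the average to $F(S)$. The only difference is that you prove the per-block inequality explicitly via the max--expectation interchange, whereas the paper simply cites the SAA bias result of Mak, Morton, and Wood; your version is just a self-contained rendering of that same fact.
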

\begin{proof}
Based on the bias result in sample average approximation \citep{mak.morton.wood:99}, for each $k \in [N_1]$ it holds that 
\[ \Expect_{\xi^{N_2}} \bigl[ \rub(\eta^k_S) \bigr] \geq R(\eta^k_S) \]
where the expectation is taken with respect to the sample $\xi^{k1},\ldots,\xi^{kN_2}$ that is conditional on $\rveta_S = \eta^k_S$.
Thus,
\begin{align*}
\Expect_{\eta^{N_1},\xi^{N_2}} \bigl[ \fub(S) \bigr] &= \Expect_{\eta^{N_1}} \Bigl[ N_1^{-1} \sum_{k \in [N_1]} \Expect_{\xi^{N_2}}[\rub(\eta_S^k)] \Bigr] \\
&\geq \Expect_{\eta^{N_1}} \Bigl[N_1^{-1} \sum_{k \in [N_1]} R(\eta^k_S) \Bigr] \\
& = \Expect_{\rveta_S} [R(\rveta_{S})] = F(S) .
\end{align*}
\qed
\end{proof}

If the sample $\eta^1,\ldots\eta^{N_1}$ is {\it independent} and identically distributed, then the values $\rub(\eta_s^k)$ for $k \in [N_1]$ are iid and hence can be used to compute an approximate statistical upper bound for $F(S)$.
By the Central Limit Theorem, we have
\begin{equation*}
\sqrt{N_1}\Bigl(\fub(S) - \mathbb{E}\bigl[ \rub(\rveta_S)\bigr] \Bigr) \rightarrow N\bigl(0,\operatorname{Var}(\rub(\rveta_S)\bigr),
\end{equation*}
as $N_1 \rightarrow \infty$.
The sample variance of $\rub(\rveta_S)$ is computed as
\begin{equation*}
\mathbf{s}^2_{R_{N_2}}(N_1) = (N_1-1)^{-1} \sum_{k \in [N_1]} \bigl(\rub(\eta_S^k) - \fub\bigr)^2.
\end{equation*}
Thus, for $N_1$ sufficiently large, we can approximate the probability that $F(S)$ exceeds a constant $u$ with the probability that a normal random variable with mean $\fub(S)$ and standard deviation $\mathbf{s}^2_{R_{N_2}}(N_1)$ exceeds $u$. 

Variance reduction techniques such as LHS can be used to improve these estimates similarly as was described for external sampling in Section \ref{sec:extsamp}. This can be accomplished by creating the sample $\eta^1,\ldots\eta^{N_1}$ as a set of {\it batches} where the samples in each batch are generated via LHS, but the batches are generated independently. Then, if the number of batches is sufficiently large, the sample variance of the batch averages is used in place of $\mathbf{s}^2_{R_{N_2}}(N_1)$ for the upper bound estimate.



For purposes of incorporating these upper bounds with the branch-and-bound algorithm of Section~\ref{sec:Branch_and_Bound}, it is crucial to note that these bounds are \emph{statistical} in nature.  Specifically, if the bounds are used to fathom nodes, there is a chance the optimal solution to the problem would be excluded in the search.  Thus, in our branch-and-bound implementation of internal sampling, we \emph{never} fathom a node by bound and simply enumerate nodes until a specified time limit is reached.  In our implementation, we always select to evaluate the node with the largest (statistical) upper bound, emulating the well-known \emph{best-bound} node selection rule from MIP \citep{linderoth.savelsbergh:99}.

\subsubsection*{Exploiting small support of $\rveta$ or $\rvxi$}

If the random variable $\rvxi$ has finite support (e.g., if each element is a Bernoulli random variable) there may be situations where the conditional support of $\rvxi$ given a sample of $\eta_S^k$ of $\rveta_S$ is small enough that $R(\eta_S^k)$ can be computed exactly as described in \eqref{eq:r_etas_s_eval}. In this case, one can replace the sample estimate $\rub(\eta^k_S)$ with its true evaluation $R(\eta_S^k)$ in \eqref{eq:fubdef} to obtain a lower variance estimator, and the remainder of the upper bound derivation is unchanged.

On the other hand, when the support of $\rveta_S$ is small (e.g., in case elements of $\rveta$ are independent Bernoulli trials and $|S|$ is small), we can use this structure to obtain a potentially lower variance estimate of $F(S)$ by summing over these observations. In this situation, let $\eta_S^1, \dots, \eta_S^{N_S}$ be the set of all possible observations of $\rveta_S$, and let $p(\eta^k_S)$ be the probability of observing outcome $\eta^k_S$. For each $ k \in [N_S]$, we take $M$ independent batches, each of size $N_2$, of conditional samples of $\xi$ given $\rveta_S = \eta_S^k$, say $\xi^{kij}$ for $i \in [M], j \in [N_2]$. For each $k \in [N_1]$ and each batch $i \in [M]$, let
\begin{equation*}
\mathbf{\hat{R}}_{k,i,N_2}(\eta_S^k) = \max_{y \in Y} \beta^{\top} y + N_2^{-1} \sum_{j \in [N_2]} Q(y, \xi^{kij})
\end{equation*}
and let $\mu_{k,N_2}(\eta_S^k) = \mathbb{E}[\mathbf{\hat{R}}_{k,i,N_2}]$ be the expected value of each batch.
Then, for each $k \in [N_1]$ define the average of the batch values as
\begin{equation*}
\mathbf{\bar{R}}_{k,M,N_2}(\eta_S^k) = M^{-1} \sum_{i \in [M]} \mathbf{\hat{R}}_{k,i,N_2}(\eta_S^k).
\end{equation*}
The estimate $\mathbf{\bar{R}}_{k,M,N_2}(\eta_S^k)$ is an unbiased estimator of $\mu_{kN_2}(\eta_S^k)$. When the $M$ batch samples are i.i.d., by the Central Limit Theorem, we have that as $M\rightarrow \infty$
\begin{equation}
\label{eq:R_CLT}
\sqrt{M}[\mathbf{\bar{R}}_{k,M,N_2}(\eta_S^k) - \mu_{k,N_2}(\eta_S^k)]] \rightarrow N\Bigl(0,\operatorname{Var}\bigl(\mathbf{\hat{R}}_{k,i,N_2}(\eta_S^k) \bigr)\Bigr).
\end{equation}
The sample variance of $\mathbf{\hat{R}}_{k,i,N_2}(\eta_S^k)$ is computed as
\begin{equation*}
\mathbf{s}^2_{R_k}(M) = (M-1)^{-1} \sum_{i \in [M]}\bigl(\mathbf{\hat{R}}_{k,i,N_2}(\eta_S^k) - \mathbf{\bar{R}}_{k,M,N_2}(\eta_S^k)\bigr)^2.
\end{equation*}
Define
\begin{align}
\bar{F}(S) & := \sum_{k \in [N_S]} p(\eta_S^k) \Expect[ \mathbf{\bar{R}}_{N_2}(\eta_S^k)] \ \text{and} \nonumber \\ 
\mathbf{F}_{M,N_2}(S) & := \sum_{k \in [N_S]} p(\eta_S^k) \mathbf{\bar{R}}_{k,M,N_2}(\eta_S^k) \label{eq:internal_est_enumerate}.
\end{align}
Observe that
\begin{align*}
 \bar{F}(S)
& \geq \sum_{k \in [N_S]} p(\eta_S^k) R(\eta_S^k) = F(S).
\end{align*}
Therefore, $\bar{F}(S)$ is an upper bound of $F(S)$, and $\mathbf{F}_{M,N_2}(S)$ is an unbiased estimator of $\bar{F}(S)$ and we can use it as a statistical upper bound of $F(S)$. Aggregating \eqref{eq:R_CLT} for $k\in [N_S]$ yields
\begin{align*}
\sqrt{M}[\mathbf{F}_{M,N_2}(S) - \bar{F}(S)] \rightarrow N\Bigl(0,\sum_{k \in [N_S]} p(\eta_S^k)^2 \operatorname{Var}\bigl(\mathbf{\hat{R}}_{k,i,N_2}(\eta_S^k) \bigr)\Bigr)
\end{align*}
as $M \rightarrow \infty$.
Thus, for $M$ sufficiently large, we can approximate the probability that $F(S)$ exceeds a constant $u$ with the probability that a normal random variable with mean $\mathbf{F}_{M,N_2}(S)$ and standard deviation $\mathbf{s}_M(S)$ exceeds $u$, where
\begin{equation*}
\mathbf{s}_M(S) = \Bigl(\sum_{k \in [N_S]} p(\eta_S^k)^2 \mathbf{s}^2_{R_k}(M)\Bigr)^{1/2} .
\end{equation*}







\subsubsection{Estimating a Global Upper Bound}

Now consider a branch-and-bound tree with set of leaf nodes $\mathcal{P}$, where for each node $P \in \mathcal{P}$ the information-based upper bound is calculated as $c^{P} + F([n]\setminus S_0^P)$, where the constant $c^{P}$ is determined via either Lemma \ref{lem:svbr} or Lemma \ref{lem:subsetbr}, depending on the branching mechanism used. Thus, the best upper bound on $\zpesp$ that can be obtained from this tree is the value:
\[ \max \{ c^{P} + F([n] \setminus S_0^P) : P \in \mathcal{P} \}. \]

Using the upper bound procedure described in Section \ref{sss:sub}, we obtain an upper bound estimate $\mathbf{U}_P$ of $F([n]\setminus S_0^P)$ for each $P \in \mathcal{P}$ which is approximately normally distributed with an estimated mean, say $\mu_P$, and estimated standard deviation, say $s_P$. Then, a value $u$ is a global upper bound if and only if $\mathbf{U}_P \leq u$ for all $P \in \mathcal{P}$, and hence, using independence of the estimates for each node $P \in \mathcal{P}$, we can compute a $1-\alpha$ confident upper bound on $\zpesp$ as
\begin{equation*}
    u(\mathcal{P}) = \inf_{u \in \mathbf{R}} \Bigl\{ u :  \prod_{P \in \mathcal{P}} \mathbb{P}(\mathbf{U}_P \leq u) \geq 1-\alpha \Bigr\}.
\end{equation*}
Using the normal approximations, $u(\mathcal{P})$ can be found via
\begin{equation}
\label{eq:internal_ci_ub}
    u(\mathcal{P}) = \inf_{u \in \mathbf{R}} \Bigl\{ u :  \prod_{P \in \mathcal{P}} \Phi\Bigl( (u - \mu_P)/s_P \Bigr) \geq 1-\alpha \Bigr\},
\end{equation}
which can be estimated by binary search.

In our description of the branch-and-bound algorithm using internal sampling we assume that for each node we use a fixed sample to estimate the upper bound, so that the estimated upper bound at a node is only improved by branching on that node. While we do not explore this here since our focus in this paper is on deriving upper bound and branching techniques for the probing structure, we mention that one can also consider variations where the upper bound at leaf nodes is improved by doing more sampling, and the sampling effort is strategically allocated to leaf nodes -- see, e.g.,  \cite{norkin1998branch,xu2013empirical}.



\subsection{Statistical Lower Bounds on $F(S)$ and $z_{PESP}$}
\label{sss:slb}



Any candidate set of probing actions $S \subseteq [\xd]$ defines a feasible solution of \eqref{eq:mainprob}, and hence
\[ \zpesp \geq F(S)-\alpha(S). \]
Thus, a statistical lower bound on $\zpesp$ can be obtained by estimating a statistical lower bound on $F(S)$ for any $S$.

Let $\eta_S^{k}, k \in [N_1]$ be a sample of $\rveta_S$, and for each observation $\eta_S^k$,  choose a solution $\hat{y}^k \in Y$.  
For each $k \in [N_2]$, let $\hat{\xi}^{ki}, i \in [N_2]$ be a sample of $\rvxi$ taken from the conditional distribution of $\rvxi$ given $\rveta_S = \eta_S^k$. 
Then, an estimate of $F(S)$ is obtained as
\begin{equation}
\label{eq:lower_bound}
\mathbf{\underline{F}}_{N_1,N_2}(S) = N_1^{-1} \sum_{k \in [N_1]} \mathbf{\underline{R}}_{N_2}(\eta_S^k)
\end{equation}
where
\[ \mathbf{\underline{R}}_{N_2}(\eta_S^k) = \beta^{\top} \hat{y}^k + N_2^{-1} \sum_{i \in [N_2]} Q(\hat{y}^k, \hat{\xi}^{ki}) . \]
Computing this estimate requires solving $N_1 \cdot N_2$ recourse problems to evaluate $Q(\hat{y}^k,\hat{\xi}^{ki})$   for each $k \in [N_1]$ and $i \in [N_2]$.

As shown in Lemma \ref{lem:fslb}, $\mathbf{\underline{F}}_{N_1,N_2}(S)$ provides a statistical lower bound regardless of the choice of $\hat{y}^k \in Y$ for $k \in [N_1]$. However, the quality of this lower bound will be influenced by how good the solution $\hat{y}^k$ is to the problem defined by $R(\eta_S^k)$. Thus, we suggest to choose $\hat{y}^k$ by taking a separate conditional sample of $\rvxi$ given $\rveta_S = \eta_S^k$,  $\xi^{ik}, i \in [N_3]$ for each $k \in [N_1]$ and
choosing
\begin{equation}
\label{eq:ysel}
\hat{y}^k \in \arg \max_{y \in Y} \beta^{\top} y + N_3^{-1} \sum_{i \in [N_3]} Q(y, \xi^{ki}) . 
\end{equation}
Computing $\hat{y}^k$ for $k \in N_1$ in this way requires solving $N_1$ two-stage stochastic programs, each having $N_3$ scenarios.

\begin{lemma}
\label{lem:fslb}
Let $\hat{y}^k \in Y$ for $k \in [N_1]$.
Assume that $\eta^1,\ldots,\eta^{N_1}$ are identically distributed and for each $k \in [N_2]$ the sample $\xi^{k1},\ldots,\xi^{kN_2}$ is identically distributed conditional on $\rveta_S = \eta_S^k$. Then,
\[ \Expect_{\eta^{N_1},\xi^{N_2}}[ \mathbf{\underline{F}}_{N_1,N_2}(S) ] \leq F(S). \]
\end{lemma}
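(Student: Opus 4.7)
The plan is to condition on the observation $\eta_S^k$ (and on the first-stage choice $\hat{y}^k$), show the inner conditional expectation of $\mathbf{\underline{R}}_{N_2}(\eta_S^k)$ is bounded above by $R(\eta_S^k)$ using the definition of $R$ as a maximum over $Y$, then iterate expectations and average over $k$.

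More concretely, I first fix $k \in [N_1]$ and condition on both $\eta_S^k$ and $\hat{y}^k$. Since $\hat{\xi}^{k1},\ldots,\hat{\xi}^{kN_2}$ is, by hypothesis, identically distributed as the conditional distribution of $\rvxi$ given $\rveta_S = \eta_S^k$, and since under the sampling scheme these conditional draws are independent of the choice of $\hat{y}^k$ (which, when chosen via \eqref{eq:ysel}, is constructed from a separate sample $\xi^{k1},\ldots,\xi^{kN_3}$), we get
\[
\Expect\bigl[\mathbf{\underline{R}}_{N_2}(\eta_S^k) \,\bigm|\, \eta_S^k, \hat{y}^k \bigr]
= \beta^\tp \hat{y}^k + \Expect_{\rvxi}\bigl[ Q(\hat{y}^k, \rvxi) \,\bigm|\, \rveta_S = \eta_S^k \bigr].
\]
Because $\hat{y}^k \in Y$, the right-hand side is at most
\[
\max_{y \in Y} \Bigl\{ \beta^\tp y + \Expect_{\rvxi}\bigl[ Q(y, \rvxi) \,\bigm|\, \rveta_S = \eta_S^k \bigr] \Bigr\} = R(\eta_S^k),
\]
by the definition \eqref{eq:conditional-recourse} of the conditional recourse function. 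This is the crux of the argument.

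Next I take the tower expectation. Taking expectation over $\hat{y}^k$ first, and then over $\eta_S^k$,
\[
\Expect\bigl[ \mathbf{\underline{R}}_{N_2}(\eta_S^k) \bigr] \leq \Expect\bigl[ R(\eta_S^k) \bigr] = \Expect_{\rveta_S}[ R(\rveta_S)] = F(S),
\]
where the middle equality uses that $\eta_S^k$ is identically distributed as $\rveta_S$. Finally, averaging over $k \in [N_1]$ and using linearity of expectation in the definition \eqref{eq:lower_bound} yields
\[
\Expect_{\eta^{N_1}, \xi^{N_2}}\bigl[ \mathbf{\underline{F}}_{N_1,N_2}(S) \bigr] = N_1^{-1} \sum_{k \in [N_1]} \Expect\bigl[\mathbf{\underline{R}}_{N_2}(\eta_S^k) \bigr] \leq F(S),
\]
completing the proof.

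The only subtle point, which I would be careful to spell out, is the independence structure needed to pull $\hat{y}^k$ out of the inner expectation: one needs that, conditional on $\eta_S^k$, the recourse-evaluation samples $\hat{\xi}^{ki}$ are independent of $\hat{y}^k$. This is immediate when $\hat{y}^k$ is a deterministic element of $Y$, and it holds under \eqref{eq:ysel} because the sample used there, $\xi^{k1},\ldots,\xi^{kN_3}$, is drawn independently of the evaluation sample $\hat{\xi}^{k1},\ldots,\hat{\xi}^{kN_2}$. Apart from that, the proof is a direct application of the definition of $R(\cdot)$ as a maximum and iterated expectations, mirroring (but reversing the inequality direction of) the bias argument used for the upper-bound lemma in Section~\ref{sss:sub}.
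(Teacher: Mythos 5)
Your proof is correct and follows essentially the same route as the paper's: the key inequality $R(\eta_S^k) \geq \beta^\tp \hat{y}^k + \Expect_{\rvxi}[Q(\hat{y}^k,\rvxi)\mid\rveta_S=\eta_S^k]$ from $\hat{y}^k\in Y$ and the definition of $R$ as a maximum, followed by iterated expectations and averaging over $k$. Your explicit remark about the independence of $\hat{y}^k$ from the evaluation sample $\hat{\xi}^{k1},\ldots,\hat{\xi}^{kN_2}$ is a point the paper leaves implicit, but it does not change the argument.
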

\begin{proof}
For each $k \in [N_1]$, $\hat{y}^k \in Y$ implies that 
\[ R(\eta_S^k) \geq \beta^{\top} \hat{y}^k + \mathbb{E}_{\rvxi} \bigl[ Q(\hat{y}^k, \hat{\xi}) \ \big| \rveta_S = \eta_S^k \bigr]. \]
Thus,
\begin{align*}
\Expect_{\eta^{N_1},\xi^{N_2}} \bigl[ \mathbf{\underline{F}}_{N_1,N_2}(S) \bigr] &= \Expect_{\eta^{N_1}} \Bigl[ N_1^{-1} \sum_{k \in [N_1]} \Expect_{\xi^{N_2}}[\mathbf{\underline{R}}_{N_2}(\eta_S^k)] \Bigr] \\
&= \Expect_{\eta^{N_1}} \Bigl[ N_1^{-1} \sum_{k \in [N_1]} \Expect_{\xi^{N_2}} \bigl[ \beta^{\top} \hat{y}^k + N_2^{-1} \sum_{i \in [N_2]} Q(\hat{y}^k, \hat{\xi}^{ki}) \bigr] \Bigr] \\
&= \Expect_{\eta^{N_1}} \Bigl[ N_1^{-1} \sum_{k \in [N_1]} \Bigl(\beta^{\top} \hat{y}^k + \Expect_{\rvxi}  \bigl[ Q(\hat{y}^k, \rvxi) | \rveta_S = \eta_S^k \bigr] \Bigr) \Bigr] \\
&\leq \Expect_{\eta^{N_1}} \Bigl[N_1^{-1} \sum_{k \in [N_1]}  R(\eta_S^k) \Bigr]  = \Expect_{\rveta_S} [ R(\rveta_S)] =  F(S). 
\end{align*}
\qed
\end{proof}

If the sample $\eta^1,\ldots\eta^{N_1}$ is {\it independent} and identically distributed, then the values $\mathbf{\underline{R}}_{N_2}(\eta_S^k)$ for $k \in [N_2]$ are iid and hence can be used to compute an approximate statistical lower bound for $F(S)$.
By the Central Limit Theorem, we have
\begin{equation*}
\sqrt{N_1}\Bigl(\mathbf{\underline{F}}_{N_1,N_2}(S) - \mathbb{E}\bigl[ \mathbf{\underline{R}}_{N_2}(\rveta_S)\bigr] \Bigr) \rightarrow N\Bigl(0,\operatorname{Var}\bigl(\mathbf{\underline{R}}_{N_2}(\rveta_S)\bigr)\Bigr)
\end{equation*}
as $N_1 \rightarrow \infty$.
The sample variance of $\underline{R}_{N_2}(\rveta_S)$ is computed as
\begin{equation*}
\mathbf{s}^2_{\underline{R}_{N_2}} = (N_1-1)^{-1} \sum_{k \in [N_1]} (\mathbf{\underline{R}}_{N_2}(\eta_S^k) - \mathbf{\underline{F}}_{N_1,N_2}(S))^2 .
\end{equation*}
Let $t_{N_1-1,\alpha}$ be the critical value of Student's t distribution.
Then, we obtain the $1-\alpha$ confidence lower bound estimate of $F(S)$ as
\begin{equation*}
\mathbf{\underline{F}}_{N_1,N_2}(S) - t_{N_1-1,\alpha} \mathbf{s}_{\underline{R}_{N_2}}N_1^{-1/2}.
\end{equation*}

\section{Greedy Heuristic}
\label{sec:heuristic}
At every node in the branch-and-bound method of Section \ref{sec:Branch_and_Bound} there is a natural candidate solution that can be used to obtain a valid (statistical) lower bound on the optimal solution value via the method described in Section~\ref{sss:slb}.  Moreover, in this case, there is no need to solve the auxiliary two-stage stochastic programs described in~\eqref{eq:ysel}, since the evaluation of $F(S)$ naturally provides high-quality candidate solutions $y^k$.
However, the solution is obtained by paying for probing for \emph{all} components of $\rveta$ that are not fixed at the node.  Hence, especially at the top of the branch-and-bound tree, the feasible solutions probe many components of $\rveta$, and the solutions may be far from optimal. In this section, we describe a greedy heuristic to obtain a feasible solution to~\eqref{eq:mainprob}. While the greedy method is very natural, our contribution is to identify techniques for relatively efficiently evaluating candidate probing actions to greedily add to the current solution at each iteration.

We first state in Algorithm \ref{alg:greedy} the greedy heuristic in its natural form, which is not practical to implement due to the need to evaluate (or even estimate) $F(S)$ for a large number of sets $S$. We use the notation $S^c = [n] \setminus S$. The method begins with $S=\emptyset$ as the initial solution.  In each iteration we have a solution $S$, and for each element $j \in S^c$, we evaluate the solution obtained by adding that element to $S$, and choose one that gives the largest value. As it is possible that the solutions obtained may improve or decline at each iteration, the method continues until all elements are probed and stores all selected solutions in the set $\mathcal{L}$, at the end returning the best solution obtained.

\begin{algorithm}
    \caption{Naive greedy heuristic.}\label{alg:greedy}
    $S \gets \emptyset$. $\mathcal{L} \gets \{S\}$\\
    \Repeat{$S = [n]$}{
        Evaluate $F(S)$. \label{evalfs} \\
        \For{$j \in S^c$} {
            $z_j \gets \alpha(S \cup \{j\}) + F(S \cup \{j\})$ \label{evalfnb}
        }
        Choose $j^* \in \arg \max\{ z_j : j \in S^c\}$ \\
        $S \gets S \cup \{j^*\}$, $\mathcal{L} \gets \mathcal{L} \cup S$.
    }
    \Return $\arg \max\{  F(S) -\alpha(S) : S \in \mathcal{L} \}$.
\end{algorithm}

The naive greedy algorithm requires $O(n^2)$ evaluations of $F(S)$, which is impractical unless the joint support of $(\rveta,\rvxi)$ is small.  We thus employ the conditional sampling method of Section \ref{sec:intsamp} to estimate $F(S)$. However, even doing this estimation $O(n^2)$ times is computationally prohibitive for a heuristic. 

In our proposed heuristic, each time we obtain a new solution $S$, the evaluation of $F(S)$ (line \ref{evalfs} of the naive greedy heuristic) is conducted using a slight adaptation of the sampling procedure for estimating a lower bound described in Section \ref{sss:slb}. Just as in Section \ref{sss:slb}, we first take a sample $\eta_S^1,\ldots,\eta_S^{N_1}$ of $\rveta_S$, and for each $k \in [N_1]$ we obtain a solution $\hat{y}^k \in Y$ by solving \eqref{eq:ysel} using a conditional sample  $\xi^{k1},\ldots,\xi^{kN_3}$ of $\rvxi$ conditional on $\rveta_S = \eta_S^k$. At this point, the evaluation diverges slightly in that we next take a {\it joint sample} $(\eta_{S^c}^{k1},\xi^{k1}),\ldots,(\eta_{S^c}^{kN_2},\xi^{kN_2})$ of $(\rveta_{S^c},\rvxi)$ conditional on $\rveta_S = \eta_S^k$ for each $k \in [N_1]$. This joint sample will be used to 
facilitate a faster estimation of $F(S \cup \{j\})$ for each $j \in S^c$ when evaluating the next element to add to the set in the greedy method. Next, we let $\hat{Y} = \{\hat{y}^1,\ldots, \hat{y}^{N_1}\}$ be the observed solutions and compute the values $Q(\hat{y}^k,\xi^{ki})$ for all $k \in [N_1],$ and $i \in [N_2]$, which requires solving $N_1 \times N_2$ recourse problems.
Then, the heuristic estimates $F(S)$ by
\begin{equation}
\label{eq:approxfs}
 N_1^{-1} \sum_{k \in [N_1]} \max_{y \in \hat{Y}} \Bigl\{ \beta^{\top} y + N_2^{-1} \sum_{i \in [N_2]} Q(y, \hat{\xi}^{ki}) \Bigr\}. 
 \end{equation}
Note that the set $Y$ is replaced by the limited set of solutions $\hat{Y}$, so that this quantity can be computed without solving any additional optimization problems. 

An important aspect of our heuristic is the ability to reuse computations to quickly estimate $F(S \cup \{j\}$ for $j \in S^c$.  
We next fix $j \in S^c$ and discuss how we estimate $F(S\cup\{j\})$. For each $k \in [N_1]$, we use $K$-means clustering  to partition the (scalar) values $\{\eta^{ki}_j: i \in [N_2]\}$ into $K$ sets of similar values, and let the scenarios in these sets be $\Omega_{N_2}^{k\ell}$ for $\ell \in [K]$, so that these sets are disjoint and $\bigcup_{\ell \in [K]} \Omega_{N_2}^{k\ell} = [N_2]$. Our approximation is based on the assumption that taking probing action $j$ will allow us to distinguish between scenarios in different sets $\Omega_{N_2}^{k\ell}$, but not scenarios within each of these sets. We also continue to use the set $\hat{Y}$ in place of $Y$, and thus estimate the value of $F(S\cup\{j\})$ as
\begin{equation}
\label{eq:approxzj}
 N_1^{-1} \sum_{k \in [N_1]} K^{-1} \sum_{\ell \in [K]} \max_{y \in \hat{Y}} \Bigl\{ \beta^{\top} y + |\Omega_{N_2}^{k\ell}|^{-1} \sum_{i \in \Omega_{N_2}^{k\ell}} Q(y, \hat{\xi}^{ki}) \Bigr\} . 
 \end{equation}
The key observation is that $Q(y,\xi^{ki})$ has already been computed for all $y \in \hat{Y}$, $k \in [N_1]$, and $i \in [N_2]$, and hence computing \eqref{eq:approxzj} does not require solving any additional optimization problems. 

Thus, our proposed greedy heuristic follows the structure of Algorithm \ref{alg:greedy}, with the differences being that the evaluation of $F(S)$ in line \ref{evalfs} is replaced by the approximation \eqref{eq:approxfs} and the evaluation of $F(S \cup \{j\})$ in line \ref{evalfnb} is replaced by \eqref{eq:approxzj}.

The computational effort of the greedy heuristic is impacted heavily by the choice of the sample sizes $N_1$ and $N_2$. To generate solutions relatively quickly, one may use relatively small sample sizes -- e.g., we use $N_1 = 20$, $N_2 = 20$, and $N_3=50$ in our experiments. While smaller sample sizes may be sufficient for guiding the greedy search, it is important to evaluate the most promising solutions using larger samples to estimate a lower bound as described in Section \ref{sss:slb}.  In our implementation, we use the estimated solution values obtained by \eqref{eq:approxfs} within the heuristic to select the ten most promising solutions from the set of solutions $\mathcal{L}$, and then evaluate these selected solutions with larger sample sizes. 

\section{Computational Results}
\label{sec:comp}

In this section, we first introduce a prototype application. Then we present the results of a series of computational experiments that demonstrate the impact of various algorithmic choices and the effectiveness of our branch-and-bound algorithm.

\newcommand{\confcost}{\beta^{\text{\small c}}}
\newcommand{\assigncost}{\beta^{\text{\small a}}}
\newcommand{\revenue}{r}

\subsection{Probing-Enhanced Facility Location}
\label{sec:New_Instance}
Our sample application is an extension of a two-stage stochastic facility location problem.  There is a set of potential facilities $I$, and each of the facilities $i \in I$ may be built in one of a given set of (capacity) configurations $C_i$.  There are also a given set of customers $J$, and each customer $j \in J$ has a (random) demand of $\bfd_j$.  Customers may only have their demand served from a single facility.  The objective of the problem is to select a set of facilities to open, to configure the facilities, and to assign customers to facilities in order to maximize the expected profit.  There is a (fixed) cost $\confcost_{ic}$ of opening facility $i \in I$ in configuration $c \in C_i$ and a fixed cost $\assigncost_{ij}$ of assigning customer $j \in J$ to facility $i \in I$.  These assignments must be done before observing the customer demands.  There are binary variables $y_{ic}$ that take the value 1 if and only if facility $i$ is opened in configuration $c \in C_i$ and binary variables $u_{ij}$ that take value 1 if and only if customer $j \in J$ is assigned to facility $i \in I$.  The stochastic facility location problem can then be written as 
\begin{subequations}
\label{eq:sfl-1st}
\begin{alignat}{2}
\max_{y,u} \ & - \sum_{i \in I} \sum_{c \in C_i} \confcost_{ic} y_{ic} - \sum_{i \in I} \sum_{j \in J} \assigncost_{ij} u_{ij} &&+ \Expect_{\bfd} [Q(y,u,\bfd)] \label{eq:sfl-objective}\\ 
\mbox{s.t. } &  \sum_{c \in C_i} y_{ic}  \leq 1  &&\forall i \in I, \label{eq:2-stage config_constr} \\
& u_{ij} - \sum_{c \in C_i} y_{ic}  \leq 0  &&\forall i \in I, \forall j \in J,  \label{eq:2-stage assign_config_constr} \\
&  \sum_{i \in I} u_{ij} \leq 1\qquad &&\forall j \in J, \label{eq:2-stage assign_sum_constr} \\
& y_{ic} \in \{0,1\}  &&\forall i \in I, \forall c \in C_i,\\
& u_{ij} \in \{0,1\}  &&\forall i \in I, \forall j \in J .
\end{alignat}
\end{subequations}

Constraints~\eqref{eq:2-stage config_constr} and~\eqref{eq:2-stage assign_config_constr} ensure that at most one configuration is selected for each facility and that customers are assigned only to open facilities. Constraint \eqref{eq:2-stage assign_sum_constr} ensures that customers are only assigned to one facility. 
In~\eqref{eq:sfl-objective}, the term $\Expect_{\bfd} Q(y,u,\bfd)$ is the expected revenue obtained when opening and sizing facilities according to $y$ and assigning customers to facilities according to $u$.  For a given realization of demands $\bfd(\omega) = d$ and first-stage solution $y$ and $u$, the maximum revenue can be obtained by solving the following linear program: 
\begin{subequations}
\label{eq:sfl-2nd}
\begin{alignat}{2}
Q(y,u,d) := \max_{f} \quad & \sum_{i \in I} \sum_{j \in J} \revenue f_{ij} \label{eq:2stage-obj} \\
\mbox{s.t. } & \sum_{j \in J} f_{ij} - \sum_{c \in C_i} \theta_{ic} y_{ic}  \leq 0 \qquad &&\forall i \in I, \label{eq:2-stage flow_config_constr} \\
 & f_{ij} - d_j u_{ij} \leq 0 \qquad &&\forall i \in I, \forall j \in J, \label{eq:2-stage flow_demand_constr} \\
 & f_{ij}  \geq 0 \qquad &&\forall i \in J, \forall j \in J.
\end{alignat}
\end{subequations}
The decision variable $f_{ij}$ is the number of units of product that customer $j \in J$ received from facility $i$, and each unit of customer demand that is met results in a revenue of $\revenue$.  The parameters $\theta_{ic}$ are the production capacity of facility $i$ if operated in configuration $c \in C_i$.

\subsubsection{Definition of Uncertainty}
\label{sec:def_uncertainty}
In our computational experiments, we have families of instances with two different distributions of customer demands.  In the first instances, the customer demands are independent random variables following a discrete distribution with two outcomes---either the customer demand is $0$ with probability $\rho_j$, or it is a known nominal value $\eta_j$ with probability $(1-\rho_j)$.  Thus, instances in this family have a total of $2^{|J|}$ scenarios. 

In the second family, customer demands are continuous random variables. Each customer $j$ can either have a low demand or a high demand. The probability of a low demand for customer $j$ is $\rho_j$, and the probability of a high demand is $(1 - \rho_j)$. For a low demand, the demand follows a triangular distribution with a minimum value of 0, a maximum value of $\eta_1$, and a mode of 0. For a high demand, the demand follows a triangular distribution with a minimum value of $\eta_2$, mode $\eta_3$, and maximum value of $\eta_4$.  The demand for each customer is independent.  Instances are available from the authors in JSON format upon request.


\subsubsection{Information Model}

The probing-enhanced stochastic programming model introduction in Section~\ref{sec:Model} allows for an arbitrary correlation structure between $\rveta$ and $\rvxi$.  In this proof-of-concept implementation, we assume that $\rveta = \rvxi$.  In the context of our facility location model, this implies that by probing customer $j \in J$, we can exactly know that customer's true demand realization. However, because our stochastic model assumes the customer demands are independent, this gives no information about the distribution of demand of {\it other} customers.  This information structure is equivalent to earlier work on \emph{decision-dependent information structure} that models the ability to control the \emph{timing} of when the outcome of random variables is known.  Here, if we probe customer $j \in J$, then we know its demand before deciding facility locations, sizes, and customer assignment to open facilities.  Future work will consider different models of correlation  between $\rveta$ and $\rvxi$.

\subsection{Computing Environment and Test Instances}

All two-stage stochastic programs are solved via the extensive form  \citep{SPbook_birge1997} and these and any other optimization problems are solved with Gurobi v9.5.1.
A computational advantage of the branch-and-bound methods proposed in  Section~\ref{sec:Branch_and_Bound} when combined with sampling is that independent estimations of problem bounds can be computed in parallel. 
 Most of the computations in this section were carried out on a shared cluster of machines of varying architectures scheduled with the HTCondor software \citep{thain.tannenbaum.livny:05}. 
 Thus, in order to report computational effort and compare different approaches we rely primarily on Gurobi's \emph{work unit} (GWU), which in our experience is a more reliable statistic to use for comparison between runs on different machines. According to Gurobi's documentation, a GWU is very roughly equivalent to a second of CPU runtime, but this may vary significantly depending on the hardware.  Unless otherwise specified, we enforce a total computational budget of 30000 GWU for obtaining an upper bound using our branch-and-bound methods.  We also report other machine-independent statistics of computational effort, such as the total nodes of all branch-and-bound trees, the total number of MIP problems solved, etc.  For computations done in parallel, we report statistics aggregated over all distributed computations.

Most of the computational effort in the information-relaxation-based branch-and-bound method is on the computation or estimation of $F(S)$ for different subsets $S \subseteq [n]$ throughout the branch-and-bound tree. 
Recall from the description of the single-element branching method of Section~\ref{sec:single-element-branching} that the value of $F(S)$ needs to be re-computed for only one of the two child nodes.  Similarly, the value of $F(S)$ needs to be recomputed for only two of the three child nodes in the multi-element branching method of Section~\ref{sec:multi-element-branching}.  So in some computational tables, we report both the number of nodes in the tree and the number function evaluations of $F(S)$ required.


We do a majority of our testing on twelve instances, six with discrete demand distribution and six with continuous distribution as described in Section~\ref{sec:def_uncertainty}  All instances have $|I|=5$ facilities, and each facility has $|C_i| = 4$ possible configurations.  The name of each instance encodes the number of customers, with an instance whose name starts with ``J$n$'' having $n=|J|$ customers, and instances whose names end in ``\_C'' have customers whose demands follow a continuous distribution.  





\begin{table}[hbtp]
\begin{center}
\caption{Comparison of Nonanticipative Formulation and Information-Relaxation Based Branch-and-Bound Algorithm}
\label{tab:na-bad}
\begin{tabular}{c|rrr|rrr}
& \multicolumn{3}{c|}{{\bf NA-Formulation}} & \multicolumn{3}{c}{{\bf Info-Relax B\&B}}\\
{\bf Instance} & \multicolumn{1}{c}{\bf \# Nodes} & \multicolumn{1}{c}{\bf GWU} & \multicolumn{1}{c|}{\bf Gap (\%)} & \multicolumn{1}{c}{\bf \# Nodes} & \multicolumn{1}{c}{\bf \# Eval} & \multicolumn{1}{c}{\bf GWU}  \\ \hline
J4 & 1 & 1.5 & 0.0 & 11 & 8 & 0.1\\
J5 & 235 & 173.0 & 0.0 & 18 & 14 & 0.2\\
J6 & 170 & 5411.6 & 0.0 & 47 & 39 & 1.8\\
J7 & 1 & 30000.0 & 39.7 & 44 & 37 & 4.9\\
J8 & 1 & 30000.0 & 191.2 & 89 & 74 & 31.1\\
J9 & - & - & - & 101 & 81 & 84.4\\
J10 & - & - & - & 276 & 234 & 363.7\\ \hline 
J20\_1 & 1 & 30000.0 & 30.7 & 85920 & 75050 & 1801.7 \\ 
J20\_2 & 1 & 30000.0 & 35.2 & 40399 & 35234 & 652.3 \\ 
J20\_3 & 1 & 30000.0 & 7.4 & 72893 & 63304 & 586.5 \\ \hline
\end{tabular}
\end{center}
\end{table}

\subsection{Comparison of Nonanticipative and Nested Formulations}
We first report results of experiments designed to compare the performance of a standard branch-and-cut-based MIP solver on the nonanticipativity-based formulation~\eqref{eq:na-mip} with the information-relaxation-based branch-and-bound algorithm for solving the probing-enhanced facility location problem.  We perform two types of comparisons.  First, we exactly solve small instances having between 4 and 10 customers and a discrete demand distribution by both methods.  Second, we solve sampled instances (with sample size $N=100$) that have 20 customers by both methods.  The multi-way branching method describe in Section~\ref{sec:multi-element-branching} is used for the information-relaxation-based scheme.  Table~\ref{tab:na-bad} shows the results of these experiments, where \# Nodes is the number of nodes explored either within the work unit limit or until terminated, GWU is the Gurobi work units, Gap (\%) is the ending optimality gap reported by the MIP solver when the work limit was reached, and \# Eval is the number of nodes in the information-based branch-and-bound algorithm on which the $F(S)$ needed to be evaluated.


For instances J9 and J10, the nonanticapative formulation could not be created due to a lack of memory.  The information-relaxation-based branch-and-bound method was able to solve all instances to optimality within the work-unit limit.  The table indicates that the information-relaxation-based branch-and-bound method can outperform the direction solution of the nonanticipativity-based formulation by several orders of magnitude.

\subsection{Impact of Variance Reduction in Sampling Methods}
\label{sec:results-sampling-method}

The quality of the statistical estimates of the solution bounds described in Section~\ref{sec:Sampling} depend heavily on the variance of observations.  In Table~\ref{tab:lhs_good}, we demonstrate the significant reduction in variance in the estimates that can be obtained by using Latin hypercube sampling (LHS).  The table shows for the instance J20\_3 the estimate of $\mathbb{E}[\bfv_N]$ for two different values of $N$ and the standard deviation of the estimate.  There were $L=30$ replications used to to compute the estimates.
The estimate $\Upsilon_{N,L}$ was computed by equation~\eqref{eq:saa_avg_val}, and the standard deviation of the estimate of the mean $s_{N,L}$, was computed by equation~\eqref{eq:saa_var_val}.
The table clearly shows a significant variance reduction when sampling via LHS for these instances, so all remaining computational results presented use LHS for obtaining estimates. 

\begin{table}[hbtp]
\centering
\caption{Sampling Method Test on Instance J20\_3}
\label{tab:lhs_good}
\begin{tabular}{c|r|rr}
\toprule
{\bf Sampling} & \multicolumn{1}{c}{\bf $N$} & \multicolumn{1}{c}{\bf $\Upsilon_{N,L}$} & \multicolumn{1}{c}{\bf $s_{N,L}$} \\ \hline
MC & 50 & 12431.8 & 964.9 \\
LHS & 50 & 12527.8 & 110.9 \\ \hline
MC & 100 & 12464.6 & 540.2 \\
LHS & 100 & 12368.0 & 78.7 \\
\bottomrule
\end{tabular}
\end{table}

\subsection{Impact of Storing $R_N(\seta_S)$ Evaluations}
\label{sec:hashing_good}

Recall from the description of using the branch-and-bound method to solve an externally-sampled instance in Section~\ref{sec:extsamp} that the value of the same two-stage stochastic program computed as $R_N(\seta_S)$ in~\eqref{eq:rncalc_saa} may be required at many different nodes in the branch-and-bound tree.
In Table~\ref{tab:hashing_good}, we show how often the values are reused 
for our discrete instances with $|J|=20$ customers, externally sampled with a sample size of $N=50$, and replicated $L=30$ times.  Our implementation uses a hash table which (for memory purposes) stores only the most recently used $10,000$ values of $R_N(\seta_S)$. For these instances, we show the average GWU, 
the average number of nodes in the branch-and-bound trees (\# Nodes), the average number of function evaluations of $F(S)$ that are required (\# Eval.), the average number of two-stage stochastic programs (values $R_N(\seta_S)$) that are required for the computation (\#SP), and the percentage of time the value of $R_N(\seta_S)$ can be recovered from a stored value (\% Lookup).  We show this data for both the single-variable branching method of Section~\ref{sec:single-element-branching} and multi-variable branching of Section~\ref{sec:multi-element-branching}.


\begin{table}[hbtp]
\centering
\caption{Impact of Hashing}
\label{tab:hashing_good}
\begin{tabular}{c|c|rrrrr}
\toprule
{\bf Instance} & {\bf Method} & \multicolumn{1}{c}{\bf GWU} & \multicolumn{1}{c}{\bf \# Nodes} & \multicolumn{1}{c}{\bf \# Eval.} & \multicolumn{1}{c}{\bf \# SP} & \multicolumn{1}{c}{\bf \% Lookup} \\ \hline
J20\_1 & single & 110.8 & 116332.6 & 83017.7 & 3531655.8 & 99.79\\
J20\_1 & multi & 112.4 & 32770.9 & 29181.9 & 1174542.4 & 99.35\\ \hline
J20\_2 & single & 111.9 & 78279.1 & 56509.7 & 2493461.7 & 99.81\\
J20\_2 & multi & 117.3 & 23871.6 & 21187.1 & 895986.7 & 99.44\\ \hline
J20\_3 & single & 52.2 & 82441.4 & 58852.5 & 2585136.0 & 99.83\\
J20\_3 & multi & 65.4 & 29537.0 & 26030.0 & 1098195.1 & 99.50\\
\bottomrule
\end{tabular}
\end{table}

Table~\ref{tab:hashing_good} shows that a remarkably high percentage of the values $R_N(\seta_S)$ can be reused during the course of the branch-and-bound search. 
The percentage is uniformly larger for the single-variable branching method,  which is consistent with the intuition that the child node subproblems change less under single-variable branching, and if a subproblem changes less, then it is more likely to be able to reuse computations when re-evaluating its bound.


Figure~\ref{fig:hashing_good} displays this phenomenon graphically, depicting the evolution of the upper bound of the branch-and-bound tree as a function of the GWU for an externally-sampled ($N=100$) instance of J20\_1 for both single-variable and multi-variable branching rules. The figure demonstrates that the bound evolution is very similar when hashing is used, but significantly better for the multi-variable branching rule if the values of $R(\seta_S)$ are not stored and re-used. However, using hashing leads to significantly faster reduction of the upper bound in both cases.

\begin{figure*}[hbt]
    \centering
    \begin{subfigure}[t]{0.5\textwidth}
        \centering
        \includegraphics[width=\textwidth]{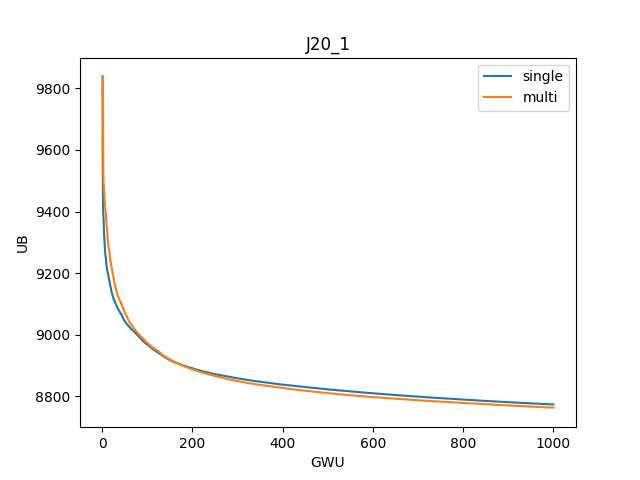}
        \caption{Use Hash Table to Look Up Values}
        \label{fig:j20_1-hash}
    \end{subfigure}%
    ~ 
    \begin{subfigure}[t]{0.5\textwidth}
        \centering
        \includegraphics[width=\textwidth]{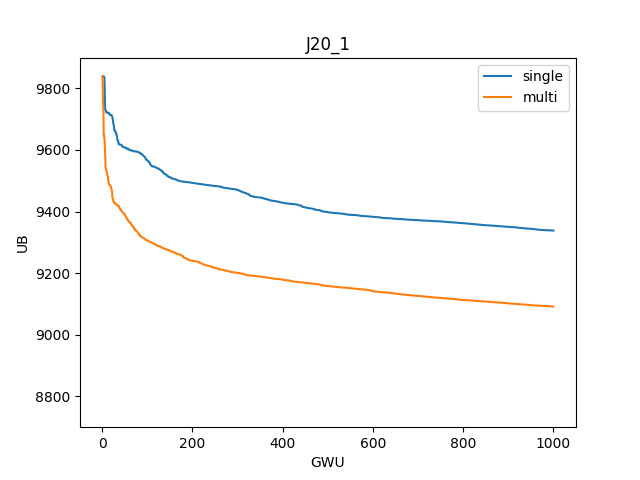}
        \caption{Recompute All Values}
        \label{fig:j20_1-nohash}
    \end{subfigure}
    \caption{Evolution of upper bound for externally-sampled instance of J20\_1 instance with $N=100$}
    \label{fig:hashing_good}
\end{figure*}

                





\subsection{Comparison of Branching Methods}
\label{sec:branch_compare}

This subsection is aimed at providing an empirical comparison between the single-element branching method of Section~\ref{sec:single-element-branching} and the multi-element branching method of Section~\ref{sec:multi-element-branching}.

\subsubsection*{External Sampling }

As explained at the beginning of Section~\ref{sec:intsamp}, employing the external sampling method for instances with a continuous probability distribution will yield only the perfect-information bound.  Thus, to evaluate branching methods for external sampling, we consider only our test instances that come from a discrete probability distribution.

In the experiment, we run the 
information-relaxation-based branch-and-bound method of Section~\ref{sec:Branch_and_Bound} on sampled instances of sizes $N \in \{50,100,200\}$ with different branching rules and compare the performance.  We employ $L=30$ different joint samples of $(\rveta,\rvxi)$ for each value of $N$ to estimate $\mathbb{E}[\bfv_N]$ via equation~\eqref{eq:saa_avg_val}. Each replication is run with a GWU limit of 1000. 

We test three branching methods in this experiment. Method {\tt random} follows the single variable branching method presented in Section~\ref{sec:single-element-branching} with the choice of probing element to branch on chosen randomly among the candidates.
Method {\tt single} also does single variable branching, but uses the first option described in the 
\emph{branching variable selection} portion of Section~\ref{sec:single-element-branching} for selecting the probing element to branch on.  Specifically, we use the observed difference between values of $R(\rveta_S)$ for the two outcomes for probing customer $j$, 
($\Expect_{\rveta_S} [R(\rveta_{S})|\rveta_j = 1] - \Expect_{\rveta_S} [R(\rveta_{S})|\rveta_j = 0]$), for the measure of importance of excluding $j \in [n]$ from probing consideration,  $\hat{\Delta}_j^-$. Method {\tt multi} follows the multi-element branching scheme described in Section~\ref{sec:multi-element-branching}, with branching decisions made using the same scoring as in method {\tt single}.

Table~\ref{tab:branching_compare_saa} shows the results of this experiment.
This table displays, for each combination of instance, sample size ($N$), and branching method employed: the number of the 30 replications that are successfully solved within the work unit limit (Solved), the average number of nodes solved over the $L=30$ replications (\#Nodes), the average number of evaluations of the function $F(S)$ that are required (\#Eval), the average number of work units (GWU), and the estimate of the upper bound $\mathbf{\Upsilon}_{N,L}$ (UB). We do not test method {\tt rand} on the instances with $N=200$ because the instances with $N \in \{50,100\}$ were sufficient to demonstrate the superior performance of {\tt single} over {\tt rand}.

Since for each of the three branching methods, all $L=30$ of the sampled instances having $N=50$ solved successfully, the estimated value of $\mathbb{E} [{\bfv}_{50}]$ (in UB) is the same in all cases.  In case that one or more of the 30 replications did not solve, we use upper bound of the branch-and-bound tree at the work limit as the value $\bfv^\ell_N$ in equation~\eqref{eq:saa_avg_val} to obtain a valid statistical upper bound on $\zpesp$.

\begin{table}[hbtp]
\centering
\caption{Branching Method Performance for Externally Sampled Instances}
\label{tab:branching_compare_saa}
\begin{tabular}{ccc|rrrr|r}
\toprule
{\bf Instance} & {\bf $N$} & {\bf Method} & \multicolumn{1}{c}{\bf Solved} & \multicolumn{1}{c}{\bf \# Nodes} & \multicolumn{1}{c}{\bf \# Eval.} & \multicolumn{1}{c}{\bf GWU} & \multicolumn{1}{c}{\bf 95\% UB} \\ \hline
J20\_1 & 50 & random & 30 & 124479.0 & 89333.8 & 256.8 & 9012.6\\
J20\_1 & 50 & single & 30 & 116332.6 & 83017.7 & 110.8 & 9012.6\\
J20\_1 & 50 & multi & 30 & 32770.9 & 29181.9 & 112.4 & 9012.6\\ \hline
J20\_2 & 50 & random & 30 & 86553.5 & 62983.4 & 212.0 & 10785.5\\
J20\_2 & 50 & single & 30 & 78279.1 & 56509.7 & 111.9 & 10785.5\\
J20\_2 & 50 & multi & 30 & 23871.6 & 21187.1 & 117.3 & 10785.5\\ \hline
J20\_3 & 50 & random & 30 & 99418.4 & 71926.8 & 121.1 & 12563.9\\
J20\_3 & 50 & single & 30 & 82441.4 & 58852.5 & 52.2 & 12563.9\\
J20\_3 & 50 & multi & 30 & 29537.0 & 26030.0 & 65.4 & 12563.9\\ \hline
J20\_1 & 100 & random & 0 & 78585.1 & 55584.7 & 1000.1 & 8999.1\\
J20\_1 & 100 & single & 0 & 162637.2 & 112977.8 & 1000.1 & 8890.3\\
J20\_1 & 100 & multi & 0 & 63584.7 & 55793.2 & 1000.0 & 8865.7\\ \hline
J20\_2 & 100 & random & 0 & 74902.0 & 53348.6 & 1000.0 & 10726.5\\
J20\_2 & 100 & single & 8 & 147508.7 & 102501.2 & 937.6 & 10632.6\\
J20\_2 & 100 & multi & 10 & 54807.4 & 47906.7 & 943.9 & 10627.2\\ \hline
J20\_3 & 100 & random & 0 & 119891.7 & 84635.9 & 1000.0 & 12425.7\\
J20\_3 & 100 & single & 25 & 161778.2 & 111920.8 & 645.5 & 12358.5\\
J20\_3 & 100 & multi & 28 & 68844.8 & 59858.6 & 668.8 & 12356.2\\ \hline
J20\_1 & 200 & single & 0 & 59605.0 & 41213.0 & 1000.0 & 8934.9\\
J20\_1 & 200 & multi & 0 & 16636.9 & 14489.3 & 1000.0 & 8891.9\\ \hline
J20\_2 & 200 & single & 0 & 52628.9 & 36447.0 & 1000.0 & 10689.6\\
J20\_2 & 200 & multi & 0 & 14464.6 & 12522.2 & 1000.0 & 10671.5\\ \hline
J20\_3 & 200 & single & 0 & 80465.0 & 55133.2 & 1000.0 & 12364.1\\
J20\_3 & 200 & multi & 0 & 25446.3 & 21803.1 & 1000.0 & 12373.0\\ \bottomrule
\end{tabular}
\end{table}

From Table~\ref{tab:branching_compare_saa} we conclude from comparing the {\tt random} and {\tt single} branching methods  that our strategy for selecting the branching entity described significantly outperforms branching randomly.  
We next observe that there is little difference in performance between single and multi-variable branching in terms of the total number of work units required to solve an instance or the bound obtained in a fixed number (1000) of work units.  Note, however, that the number of nodes and function evaluations required are significantly larger for single-variable compared to multi-variable branching. To explain this apparent contradiction, the reader is reminded of the results of Section~\ref{sec:hashing_good}, where it is demonstrated that single variable branching allows for significantly more reuse of portions of the $F(S)$ computations.  A second, more subtle reason for faster evaluation of nodes in single-variable branching is that the number of excluded candidate probing actions at nodes by multiple-variable branching is larger than for single-node branching.  As mentioned in Section~\ref{sec:pesp}, the difficulty of evaluating $F(S)$ depends inversely on the cardinality of $S$, so as more candidates probing actions are excluded from consideration, the upper bound calculation of $F([n] \setminus S_0^P)$ tends to require more computational effort for branch-and-bound nodes in a multiple-variable branching tree.

\begin{table}[hbtp]
\centering
\caption{Branching Method Performance for Internally Sampled Instances}
\label{tab:branching_internal}
\begin{tabular}{cc|rr|r}
\toprule
{\bf Instance} & {\bf Method} & \multicolumn{1}{c}{\bf \# Nodes} & \multicolumn{1}{c}{\bf \# Eval.} & \multicolumn{1}{c}{\bf 95\% UB} \\ \hline
J20\_1 & single & 844 & 547 & 8940.1\\
J20\_1 & multi & 275 & 218 & 8834.1\\ \hline
J20\_2 & single & 529 & 343 & 10607.7\\
J20\_2 & multi & 124 & 94 & 10611.5\\ \hline
J20\_3 & single & 1260 & 838 & 12076.8\\
J20\_3 & multi & 358 & 283 & 12283.4\\ \hline
J25\_1 & single & 847 & 539 & 9446.4\\
J25\_1 & multi & 214 & 165 & 9502.2\\ \hline
J25\_2 & single & 602 & 393 & 11091.0\\
J25\_2 & multi & 158 & 121 & 11172.5\\ \hline
J25\_3 & single & 342 & 225 & 10553.9\\
J25\_3 & multi & 178 & 135 & 10372.6\\ \hline
J20\_1\_C & single & 223 & 149 & 10496.5\\
J20\_1\_C & multi & 167 & 127 & 10401.4\\ \hline
J20\_2\_C & single & 183 & 122 & 10709.1\\
J20\_2\_C & multi & 127 & 98 & 10584.9\\ \hline
J20\_3\_C & single & 193 & 127 & 11423.6\\
J20\_3\_C & multi & 137 & 104 & 11279.8\\ \hline
J25\_1\_C & single & 141 & 93 & 11688.8\\
J25\_1\_C & multi & 86 & 64 & 11560.1\\ \hline
J25\_2\_C & single & 153 & 101 & 10147.7\\
J25\_2\_C & multi & 104 & 78 & 10064.7\\ \hline
J25\_3\_C & single & 119 & 81 & 9231.3\\
J25\_3\_C & multi & 68 & 51 & 9093.6\\
\bottomrule
\end{tabular}
\end{table}

\subsubsection*{Internal Sampling}

Table~\ref{tab:branching_internal} shows the results of an experiment comparing the single and multiple variable branch-and-bound methods on instances solved via our internal-sampling approach.  

Our internal sampling implementation uses LHS to generate the external sample of size $N_1 = 300$ as 30 independent batches of size 10.  The conditional sample has size $N_2 = 100$, and the statistical upper bound is estimated via equation~\eqref{eq:fubdef}.  If, however, the cardinality of the support of $\rveta_S$ is less than or equal to $8$, we forgo the sample and enumerate all possible outcomes, as explained at the end of subsection~\ref{sss:sub}.  In this case, for each possible $\eta_S^k$, we take $M=30$ independent batches of conditional samples $\xi$ given $\rveta_S = \eta_S^k$ of size $N_2=100$ and we estimate a statistical upper bound on $F(S)$ using equation~\eqref{eq:internal_est_enumerate}.  When selecting a branching candidate, we use the observed sample covariance between $\rveta_j$ and $R(\rveta_S)$ (from equation~\eqref{eq:branching_covariance}) for the importance score $\hat{\Delta}_j$.



All instances in the table were run with a maximum GWU limit of 30000.  The table shows the number of nodes of the branch-and-bound tree (\#Nodes), the number of evaluations of $F(S)$ (\# Eval), and the 95\% confident estimate of an upper bound on $\zpesp$ (95\% UB) computed by~\eqref{eq:internal_ci_ub} obtained by the method within the work limit by both branching methods.

For instances whose random variables are discrete, again the performance difference between single and multiple variable branching seems negligible in the internal sampling method.  
Note that single-variable branching is able to evaluate significantly many more nodes within the work limit than multiple-variable branching.  However, as the sampling is conditional in the internal-sampling approach, there is no ability to reuse portions of the $F(S)$ computations.  Thus, the reason for the similarity in performance between the methods is different 
for internal sampling than it is for external sampling.  
In the case of internal sampling, the difference is explained by the fact that in single-variable branching, nodes $P$ tend to have fewer candidate probing actions excluded, so the estimation of $F([n] \setminus S_0^P)$ is somewhat easier.  Also, there are more nodes whose bounds can be estimated using the ability to exploit the small support of the random variable $\rveta_S$, as explained at the end of 
Section~\ref{sss:sub}. 



For continuous distribution instances, the performance of the multiple-variable branching seems slightly better than the single-variable branching.

\subsection{Comparison of External and Internal Sampling}
\label{sec:comp-results-compare}

We next compare the performance of external and internal sampling methods in terms of the quality of upper bounds they are able to obtain within the same work limit.  
The runs (and all parameters values) used to make this comparison are the same as used for the experiments described in Section~\ref{sec:branch_compare}, and we use the multi-variable branching method. For the internal sampling method we use a single branch-and-bound search with total work limit of 30000 GWU, and for the external sampling method we use $L=30$ replications, each with a work limit of 1000 GWU. For external sampling, the 95\% confidence interval was computed by~\eqref{eq:saa_ci_ub}, and for internal sampling, the confidence 95\% confidence estimate for the upper bound for internal sampling is found by equation~\eqref{eq:internal_ci_ub}. 

In Table~\ref{tab:branching_compare_int_ext}, for the discrete distribution test instances we present the 95\% confidence upper bound on $\zpesp$ obtained by the internal sampling method and the external sampling method with three different sample sizes $N \in \{50,100,200\}$. We also present the 95\% confidence upper bound on $\zpesp$ obtained by the internal sampling on the continuous distribution instances, and for context, the table includes the point estimates of the perfect information bound and the best lower bound for each instance. 
Most of the best solutions were found by the greedy heuristic of Section~\ref{sec:heuristic}, and we provide more details of lower bound computations in Section~\ref{sec:comp-results-heuristic}. Note that the values of the PI Bound and the Best LB are point estimates of the values, while we present a 95\% confidence limit for the upper bounds obtained from branch-and-bound.  The standard error associated with the perfect information and lower bound estimates is in the range of $50-100$ and is given in Table~\ref{tab:bound-info-instances} in the appendix.


\begin{table}[hbtp]
\centering
\caption{95\% Confidence-Level Upper Bound Obtained by External and Internal Sampling Methods}
\label{tab:branching_compare_int_ext}
\begin{tabular}{c|rrr|r|rr}
\toprule
& \multicolumn{3}{|c|}{\bf External} & \multicolumn{1}{c|}{\bf Internal} & \multicolumn{1}{c}{\bf PI} & \multicolumn{1}{c}{\bf Best} \\
{\bf Instance} & {\bf $N=50$} & \multicolumn{1}{c}{\bf $N=100$} & \multicolumn{1}{c|}{\bf $N=200$} & \multicolumn{1}{c|}{\phantom{x}} & \multicolumn{1}{c}{\bf Bound} & \multicolumn{1}{c}{\bf LB}\\ \hline
J20\_1 & 9048.6 & 8881.5 & 8903.9 & 8834.1 & 9739.2 &  7919.0 \\
J20\_2 & 10807.6 & 10645.6 & 10683.3 & 10611.5 & 11465.6  & 9551.0 \\ 
J20\_3 & 12597.6 & 12378.1 & 12386.8 & 12283.4 & 13565.2 & 11454.0 \\
J25\_1 & 9702.3 & 9697.1 & 9744.2 & 9502.2 & 10404.7 & 8278.5 \\ 
J25\_2 & 11370.6 & 11330.7 & 11401.7 & 11172.5 & 12576.6 & 9756.4 \\ 
J25\_3 & 10668.6 & 10605.8 & 10671.2 & 10372.6 & 11389.8 & 8882.9 \\
J20\_1\_C & & & & 10401.5 & 11287.9 & 9278.4 \\
J20\_2\_C & & & & 10584.9 & 11631.4 & 9302.1 \\
J20\_3\_C & & & & 11279.8 & 12411.7 & 10015.2 \\
J25\_1\_C & & & & 11560.1 & 12330.1 & 9971.6 \\
J25\_2\_C & & & & 10064.7 & 10801.0 & 8787.1 \\
J25\_3\_C & & & & 9093.6 & 9796.7 & 7552.6 \\
\bottomrule
\end{tabular}
\end{table}

From Table~\ref{tab:branching_compare_int_ext} we observe that the internal sampling method gives moderately better upper bounds than the external sampling method within this fixed work limit on the discrete instances.  Second, comparing the change in upper bound obtained by the external sampling method as the sample size $N$ increases can provide an indication of the reduction in bias of the estimate of $\zpesp$ as $N$ increases.  If all externally-sampled instances were able to be solved to optimality within the 1000 GWU limit, we would expect the bounds to be monotonically-decreasing.  (Refer to Table~\ref{tab:branching_compare_saa} to see how many of the 30 instances were solved to optimality). Thus, we find that the upper bounds from $N=100$ are generally lower than those obtained with $N=50$, but this trend reverses when increasing $N$ to 200, due to the time limit being reached more often on the $N=200$ instances.  Finally, we find that the information-relaxation-based branch-and-bound method significantly improves over the upper bound obtained from the perfect-information relaxation.   We are unaware of other algorithms for this problem class that can yield similar improvement.

\subsection{Performance of Greedy Heuristic}
\label{sec:comp-results-heuristic}

Finally, we study the computational performance of the greedy heuristic method of Section~\ref{sec:heuristic} by comparing the value of the best solution found by the heuristic compared to other methods that more directly rely on solutions obtained by the branch-and-bound search.  

In our implementation of the greedy heuristic, at each iteration we solve $N_1=20$ two-stage stochastic programs, each having $N_3=50$ scenarios, to get our restricted set of solutions $Y$.  We use the same $N_1=20$ samples as the outer sample, and for each $\seta_S^k, k \in [N_1]$ in the sample, we take a conditional sample of size $N_2=20$ to estimate the value of $F(S)$ at step 3 of the heuristic given in Algorithm~\ref{alg:greedy}.  To estimate $F(S \cup \{j\})$ at step 5 of the algorithm, we use $K$-means clustering with $K=4$, as described in Section~\ref{sec:heuristic}.  


The value of each feasible solution, regardless of how it was generated, was estimated using the process described in Section~\ref{sss:slb}. Given a candidate probing set $S$, we take sample of size $N_1=25$ of $\rveta_S$, and for each $\eta_S^k$ in the sample we 
obtain a solution $\hat{y}^k, k \in [N_1]$, by solving a two-stage stochastic program \eqref{eq:ysel} with $N_3=100$ scenarios of $\rvxi$ generated conditionally on $\rveta_S = \eta_S^k$.
Then for each $k \in [N_1]$, an independent conditional sample of $\rvxi$ of size $N_2=2000$ is taken and the recourse problems $Q(\hat{y}^k,\hat{\xi}^{ki})$ are solved for $k \in [N_1]$ and $i \in [N_2]$ to obtain the lower bound via equation~\eqref{eq:lower_bound}.


Table~\ref{tab:sols_found} shows the estimated values of the best solution found by the greedy heuristic method, the best solution obtained from running the internal sampling branch-and-bound method,  and the best solution found during \emph{any} of the replications of the externally-sampled branch-and-bound trees.  As the values were all estimated using the sample sample sizes $N_1, N_2$ and $N_3$, all estimates have approximately the same standard error, which tends to be around 50. 

\begin{table}[hbtp]
\centering
\caption{Estimated Objective Values of Best Solutions Found}
\label{tab:sols_found}
\begin{tabular}{c|rrr}
\toprule
{\bf Instance} & {\bf Heuristic} & \multicolumn{1}{c}{\bf Internal} & \multicolumn{1}{c}{\bf External} \\ \hline
J20\_1 & 7859.8 & 7734.6 & 7808.4\\
J20\_2 & 9511.8 & 9484.9 & 9551.0\\
J20\_3 & 11353.6 & 11219.6 & 11280.1\\ \hline
J25\_1 & 8199.0 & 8212.8 & 8076.3\\
J25\_2 & 9654.7 & 9451.7 & 9638.4\\
J25\_3 & 8804.7 & 8751.3 & 8836.7\\ \hline
J20\_1\_C & 9278.3 & 9019.2 & -\\
J20\_2\_C & 9224.7 & 9136.7 & -\\
J20\_3\_C & 10015.2 & 9926.3 & -\\ \hline
J25\_1\_C & 9971.6 & 9807.7 & -\\
J25\_2\_C & 8732.4 & 8486.4 & -\\
J25\_3\_C & 7549.6 & 7455.0 & -\\
\bottomrule
\end{tabular}
\end{table}

When evaluating the results of Table~\ref{tab:sols_found}, it is important to take into consideration that the value is the \emph{best} observed from solutions in that category, and the different methods had different numbers of candidate solutions.  
The greedy heuristic method estimates solution value for the ten solutions whose initial estimates are best.  For the internal sampling approach, only the solution coming from the leaf node $P$ with the largest estimated lower bound value of $F([\xd] \setminus S^P_0)-\alpha([\xd] \setminus S^P_0)$ was evaluated.  For the externally-sampled approach, the best solution from each of the 30 replications for each of the branching methods and for each sample sizes $N \in \{50,100,200\}$ is re-evaluated.  (Thus, there are 240 potential solutions for the J20 instances and 180 potential solutions for the J25 instances). 


A primary takeaway from the comparison between solution methods is that the greedy heuristic performs quite well at finding high-quality solutions compared to other methods, especially when taking into account the difference in number of different solutions whose value was estimated.  This takeaway is reinforced by 
Figure \ref{fig:heur_j25} which shows histograms of the estimated objective values of the solutions for the 10 solutions evaluated by the heuristic and all the solutions found by external sampling, for two instances with discrete distribution and 25 customers.   

\begin{figure}[bthp]
    \begin{center}
        \includegraphics[width=.48\linewidth]{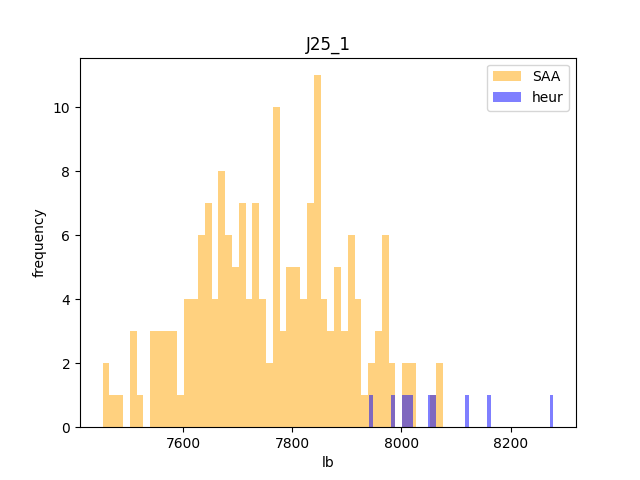}\quad\includegraphics[width=.48\linewidth]{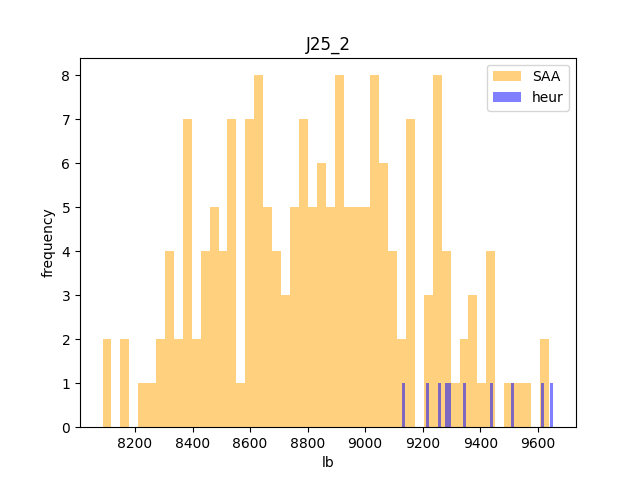}
        \caption{Distribution of Solution Values for J25\_1 and J25\_2}
        \label{fig:heur_j25}
    \end{center}
\end{figure}

The total computational effort for the heuristic is summarized in Table~\ref{tab:heur_effort}.  The table shows the total number of GWU required to perform the entire greedy heuristic (Heur), and the total GWU required to accurately estimate the value of all 10 of the perceived best solutions from the greedy procedure (Eval).  The table demonstrates that the computational effort of the heuristic is quite modest in comparison to the work limit imposed in the branch-and-bound methods.  The effort to perform a more accurate evaluation of the candidate solutions is more significant, but this effort is required, however, for obtaining an unbiased estimate of the objective value from a solution obtained by any method. 

\begin{table}[hbtp]
\centering
\caption{Work Units Required for the Heuristic}
\label{tab:heur_effort}
\begin{tabular}{c|rr}
\toprule
{\bf Instance} & \multicolumn{1}{c}{\bf Heur.} & \multicolumn{1}{c}{\bf Eval.} \\ \hline 
J20\_1 & 170.1 &  1735.7\\
J20\_2 & 279.1 &  5547.6\\
J20\_3 & 139.9 &  1847.4\\
J25\_1 & 373.7 &  4867.5\\
J25\_2 & 339.7 &  4998.5\\
J25\_3 & 521.4 &  9392.6\\
J20\_1\_C & 357.0 &  5589.5\\
J20\_2\_C & 466.0 &  6179.1\\
J20\_3\_C & 426.5 &  6235.2\\
J25\_1\_C & 856.8 &  12123.8\\
J25\_2\_C & 795.8 &  11245.9\\
J25\_3\_C & 1229.4 &  35910.2\\
\bottomrule
\end{tabular}
\end{table}




\section{Conclusions}

We have introduced the \emph{probing-enhanced stochastic program}, a new paradigm for modeling the decision-dependence of the distribution of random variables in stochastic programming.  We develop an information-relaxation-based branch-and-bound method for its solution that significantly outperforms the direct solution of a nonanticipative formulation of the problem and, when combined with our proposed sampling approximation, is the first method that can provide statistical bounds that improve upon perfect information bounds for decision-dependent stochastic programs having continuous distribution. Preliminary computational results indicate the promise of the approach. Interesting directions for future work include studying the potential to speed up the method by using decomposition methods to solve the two-stage stochastic programs that have to be solved to obtain bounds and testing the method on different information structures.



%
\section*{Conflict of interest}

The authors declare that they have no conflict of interest.


\begin{thebibliography}{24}
\providecommand{\natexlab}[1]{#1}
\providecommand{\url}[1]{{#1}}
\providecommand{\urlprefix}{URL }
\expandafter\ifx\csname urlstyle\endcsname\relax
  \providecommand{\doi}[1]{DOI~\discretionary{}{}{}#1}\else
  \providecommand{\doi}{DOI~\discretionary{}{}{}\begingroup
  \urlstyle{rm}\Url}\fi
\providecommand{\eprint}[2][]{\url{#2}}

\bibitem[{Artstein(1999)}]{artstein:99}
Artstein Z (1999) Gains and costs of information in stochastic programming.
  Annals of Operations Research 85:129--152

\bibitem[{Artstein and Wets(1993)}]{artstein.wets:93}
Artstein Z, Wets RJB (1993) Sensors and information in optimization under
  stochastic uncertainty. Mathematics of Operations Research 18(3):523--547

\bibitem[{Birge and Louveaux(1997)}]{SPbook_birge1997}
Birge JR, Louveaux F (1997) Introduction to stochastic programming. Springer,
  New York

\bibitem[{Boland et~al.(2016)Boland, Dumitrescu, Froyland, and
  Kalinowski}]{boland.et.al:16}
Boland N, Dumitrescu I, Froyland G, Kalinowski T (2016) Minimum cardinality
  non-anticipativity constraint sets for multistage stochastic programming.
  Mathematical Programming 157:69--93

\bibitem[{Car{\o}e and Schultz(1999)}]{caroe1999dual}
Car{\o}e CC, Schultz R (1999) Dual decomposition in stochastic integer
  programming. Operations Research Letters 24(1-2):37--45

\bibitem[{Colvin and Maravelias(2009)}]{colvin.maravelias:09}
Colvin M, Maravelias C (2009) A branch and cut framework for multi-stage
  stochastic programming problems under endogenous uncertainty. Computer Aided
  Chemical Engineering 27:255--260

\bibitem[{Colvin and Maravelias(2010)}]{colvin.maravelias:10}
Colvin M, Maravelias C (2010) Modeling methods and a branch and cut algorithm
  for pharmaceutical clinical trial planning using stochastic programming.
  European Journal of Operational Research 203:205--215

\bibitem[{Dupa\v{c}ov\'{a}(2006)}]{dupacova:06}
Dupa\v{c}ov\'{a} J (2006) Optimization under exogenous and endogenous
  uncertainty. In: Proceedings of MME06, pp 131--136

\bibitem[{Freimer et~al.(2012)Freimer, Linderoth, and Thomas}]{Freimer2012LHS}
Freimer M, Linderoth J, Thomas D (2012) The impact of sampling methods on bias
  and variance in stochastic linear programs. Computational Optimization and
  Applications 51:51--75, \doi{10.1007/s10589-010-9322-x}

\bibitem[{Goel and Grossmann(2004)}]{goel.grossmann:04}
Goel V, Grossmann IE (2004) A stochastic programming approach to planning of
  offshore gas field developments under uncertainty in reserves. Computers and
  Chemical Engineering 28(8):1409--1429

\bibitem[{Goel and Grossmann(2006)}]{Goel2006}
Goel V, Grossmann IE (2006) A class of stochastic programs with decision
  dependent uncertainty. Mathematical Programming 108:355--394,
  \doi{10.1007/s10107-006-0715-7}

\bibitem[{Hellemo et~al.(2018)Hellemo, Barton, and
  Tomasgard}]{hellemo.barton.tomasgard:18}
Hellemo L, Barton PE, Tomasgard A (2018) Decision-dependent probabilities in
  stochastic programs with recourse. Computational Management Science
  15:369--395

\bibitem[{Laporte and Louveaux(1993)}]{laporte1993integer}
Laporte G, Louveaux F (1993) The integer {L-shaped} method for stochastic
  integer programs with complete recourse. Operations Research Letters
  13(3):133--142

\bibitem[{Linderoth and Savelsbergh(1999)}]{linderoth.savelsbergh:99}
Linderoth JT, Savelsbergh MWP (1999) A computational study of search strategies
  in mixed integer programming. {INFORMS} Journal on Computing 11:173--187

\bibitem[{Mak et~al.(1999)Mak, Morton, and Wood}]{mak.morton.wood:99}
Mak WK, Morton DP, Wood RK (1999) {Monte Carlo} bounding techniques for
  determining solution quality in stochastic programs. Operations Research
  Letters 24:47--56

\bibitem[{McKay et~al.(1979)McKay, Beckman, and Conover}]{McKay1979LHS}
McKay MD, Beckman RJ, Conover WJ (1979) A comparison of three methods for
  selecting values of input variables in the analysis of output from a computer
  code. Technometrics 21(2):239--245,
  \urlprefix\url{http://www.jstor.org/stable/1268522}

\bibitem[{Mercier and Van~Hentenryck(2008)}]{Mercier2008}
Mercier L, Van~Hentenryck P (2008) Amsaa: A multistep anticipatory algorithm
  for online stochastic combinatorial optimization, Lecture Notes in Computer
  Science (including subseries Lecture Notes in Artificial Intelligence and
  Lecture Notes in Bioinformatics), vol 5015 LNCS. Springer,
  \urlprefix\url{www.scopus.com}, cited By :18

\bibitem[{Norkin et~al.(1998)Norkin, Pflug, and
  Ruszczy{\'n}ski}]{norkin1998branch}
Norkin VI, Pflug GC, Ruszczy{\'n}ski A (1998) A branch and bound method for
  stochastic global optimization. Mathematical programming 83:425--450

\bibitem[{Solak et~al.(2010)Solak, Clarke, Johnson, and Barnes}]{SOLAK2010}
Solak S, Clarke JPB, Johnson EL, Barnes ER (2010) Optimization of r\&d project
  portfolios under endogenous uncertainty. European Journal of Operational
  Research 207(1):420--433, \doi{https://doi.org/10.1016/j.ejor.2010.04.032},
  \urlprefix\url{https://www.sciencedirect.com/science/article/pii/S0377221710003516}

\bibitem[{Tarhan et~al.(2009)Tarhan, Grossmann, and Goel}]{Tarhan2009}
Tarhan B, Grossmann IE, Goel V (2009) Stochastic programming approach for the
  planning of offshore oil or gas field infrastructure under decision-dependent
  uncertainty. Industrial and Engineering Chemistry Research 48(6):3078--3097

\bibitem[{Thain et~al.(2005)Thain, Tannenbaum, and
  Livny}]{thain.tannenbaum.livny:05}
Thain D, Tannenbaum T, Livny M (2005) Distributed computing in practice: The
  condor experience. Concurrency and Computation: Practice and Experience
  17:323--356

\bibitem[{{Van Slyke} and Wets(1969)}]{lshaped:siam69}
{Van Slyke} R, Wets RJ (1969) L-shaped linear programs with applications to
  optimal control and stochastic programming. {SIAM} J~Appl~Math 17:638--663

\bibitem[{Vayanos et~al.(2011)Vayanos, Kuhn, and Rustem}]{vayanos:11}
Vayanos P, Kuhn D, Rustem B (2011) Decision rules for information discovery in
  multi-stage stochastic programming. In: 2011 50th IEEE Conference on Decision
  and Control and European Control Conference, pp 7368--7373,
  \doi{10.1109/CDC.2011.6161382}

\bibitem[{Xu and Nelson(2013)}]{xu2013empirical}
Xu WL, Nelson BL (2013) Empirical stochastic branch-and-bound for optimization
  via simulation. Iie Transactions 45(7):685--698

\end{thebibliography}

\section*{Appendix: Details of Branching Variable Selection}

Here we present the formulas used for computing $\hat{\Delta}_j^-$ which is used in the computation of the score for probing decision $j$ in Section \ref{sec:single-element-branching}. We describe this separately for the cases when internal sampling and external sampling is used.

\subsubsection*{External Sampling}

The first method for choosing $\hat{\Delta}_j^-$ requires estimating 
$\Expect_{\rveta_S} [R(\rveta_{S})|\rveta_j = 1]$ and $\Expect_{\rveta_S} [R(\rveta_{S})|\rveta_j = 0]$. Recall that when using external sampling, $F(S)$ is estimated via the formula
\[ \xfub(S) = 
{N}^{-1} \sum_{\seta_S \in \hat{\Eta}_S} |\Omega_N(\seta_S)| \xrub(\seta_S). \]
If we define $\hat{\Eta}_S^{jt} = \{ \seta_S \in \hat{\Eta}_S: \seta_j = t \}$, then we estimate
\[ \Expect_{\rveta_S} [R(\rveta_{S})|\rveta_j = t]
\approx (B^{jt})^{-1} \sum_{\seta_S \in \hat{\Eta}_S^{jt}} |\Omega_N(\seta_S)| \xrub(\seta_S)
\]
for $t=0,1$, where $B^{jt} = \sum_{\seta_S \in \hat{\Eta}_S^{jt}} |\Omega_N(\seta_S)|$. Note that this estimate uses all the same values $\xrub(\seta_S)$ that are already computed when computing the estimate $\xfub(S)$.

The second method for choosing $\hat{\Delta}_j^-$ requires estimating the covariance of $\rveta_j$ and $R(\rveta_S)$. This is computed via the estimate:
\[ {N}^{-1} \sum_{\seta_S \in \hat{\Eta}_S} |\Omega_N(\seta_S)| (\seta_j - m_j)(\xrub(\seta_S) - \xfub(S)), \]  
where $m_j = N^{-1} \sum_{\seta_S \in \hat{\Eta}_S} |\Omega_N(\seta_S)| \seta_j$.

\subsubsection*{Internal Sampling}

Recall that in this case $F(S)$ is estimated from \eqref{eq:fubdef} as
\[ \fub(S) = N_1^{-1} \sum_{k \in [N_1]} \rub(\eta_S^k). \]
If we define $\Omega^{tj} = \{ k \in [N_1]: \eta_j = t \}$, then we estimate
\[ \Expect_{\rveta_S} [R(\rveta_{S})|\rveta_j = t]
\approx |\Omega^{tj}|^{-1} \sum_{k \in \Omega^{tj}} \rub(\eta_S^k) 
\]
for $t=0,1$. Once again, this estimate uses all the same values $\rub(\seta_S^k)$ that are already computed when computing the estimate $\fub(S)$.

The covariance of $\rveta_j$ and $R(\rveta_S)$ is estimated via the formula
\begin{equation}
\label{eq:branching_covariance}
\hat{\Delta}_j^- = {N_1}^{-1} \sum_{k \in [N_1]} (\seta^k_j - m_j)(\rub(\seta_S^k) - \fub(S)),
\end{equation}
where $m_j = (N_1)_{-1} \sum_{k \in N_1} \seta^k_j$.

The estimates when the support of $\rveta$ or $\rvxi$ are small are adapted similarly.


Table~\ref{tab:bound-info-instances} shows an estimate of the perfect information bound (PI Bound) and the standard error of our estimate, as well as an estimate of the value of the best solution our methods found for the instance (Best LB), and the standard error of the estimate.  

\begin{table}[hbtp]
\caption{Bound Information for Instances in Computational Studies}
\label{tab:bound-info-instances}
\begin{center}
\begin{tabular}{c|rr|rr}
{\bf Name } & \multicolumn{1}{|c}{{\bf PI Bound}} & \multicolumn{1}{c|}{{\bf S.Err.}} & \multicolumn{1}{|c}{{\bf Best LB}} & \multicolumn{1}{c}{{\bf S.Err.}}\\ \hline 
J20\_1 & 9739.2 & 66.4 &  7919.0 & 59.5 \\
J20\_2 & 11465.6 & 72.1 & 9551.0 & 54.8 \\
J20\_3 & 13565.2 & 62.2 & 11454.0 & 61.3 \\
J25\_1 & 10404.7 & 68.3 & 8278.5 & 56.1 \\
J25\_2 & 12576.6 & 96.6 & 9756.4 & 48.1 \\
J25\_3 & 11389.8 & 74.2 & 8882.9 & 49.5 \\ \hline 
J20\_1\_C & 11287.9 & 62.4 & 9278.4 & 29.1 \\
J20\_2\_C & 11631.4 & 52.1 & 9302.1 & 56.4 \\
J20\_3\_C & 12411.7 & 65.0 & 10015.2 & 39.8 \\
J25\_1\_C & 12330.1 & 49.3 & 9971.6 & 20.6 \\
J25\_2\_C & 10801.0 & 59.9 & 8787.1 & 31.8\\
J25\_3\_C & 9796.7 & 49.3 & 7552.6 & 6.8\\
\end{tabular}
\end{center}
\end{table}

\end{document}